\newtheorem{theorem}{Theorem}[section]
\newtheorem{lemma}[theorem]{Lemma}
\newtheorem{corol}[theorem]{Corollary}
\newtheorem{prop}[theorem]{Proposition}
\theoremstyle{definition}
\newtheorem{definition}[theorem]{Definition}
\newtheorem{example}[theorem]{Example}
\theoremstyle{remark}
\newtheorem{remark}[theorem]{Remark}
\numberwithin{equation}{section}
\begin{document}

\title{Orbit Equivalent Substitution Dynamical Systems and Complexity}
    
\author{ S.~Bezuglyi and O.~Karpel\\
Institute for Low Temperature Physics, \\
47 Lenin Avenue, 61103 Kharkov, Ukraine \\
(e-mail: bezuglyi@ilt.kharkov.ua, helen.karpel@gmail.com)}

%
\date{}
\maketitle
\begin{abstract}
For any primitive proper substitution $\sigma$, we give explicit constructions of  countably many pairwise non-isomorphic substitution dynamical systems $\{(X_{\zeta_n}, T_{\zeta_n})\}_{n=1}^{\infty}$ such that they all are (strong) orbit equivalent to $(X_{\sigma}, T_{\sigma})$. We show that  the complexity
of the substitution dynamical systems $\{(X_{\zeta_n}, T_{\zeta_n})\}$ is essentially different that prevents them from being isomorphic.
Given a primitive (not necessarily proper) substitution $\tau$, we find a stationary simple properly ordered Bratteli diagram with the least possible number of vertices such that the corresponding Bratteli-Vershik system is orbit equivalent to $(X_{\tau}, T_{\tau})$.

\end{abstract}

\maketitle

\section{Introduction}

The seminal paper \cite{GPS} answers, among other outstanding results,  the question of orbit equivalence of uniquely ergodic minimal homeomorphisms of a Cantor set. It was proved that two such minimal systems, $(X,T)$ and $(Y,S)$, are orbit equivalent if and only if the clopen values sets $S(\mu) =\{\mu(E) : E \ \mbox{clopen\ in\ X}\}$ and $S(\nu) =\{\nu(F) : F \ \mbox{clopen\ in\ Y}\}$ coincide where $\mu$ and $\nu$ are the unique invariant  measures  with respect to $T$ and $S$, respectively. It is well known now that Bratteli diagrams play an extremely important role in the study of homeomorphisms of Cantor sets because any minimal (and even aperiodic) homeomorphism of a Cantor set is conjugate  to the Vershik map acting on the path space of a Bratteli diagram \cite{GPS}, \cite{HPS}, \cite{M}. This realization turns out to be useful in many cases, in particular, for the study of substitution dynamical systems because the corresponding Bratteli diagrams are of the simplest form.
It was proved in  \cite{DHS} that the class of minimal substitution dynamical systems coincides with Bratteli-Vershik systems of stationary simple Bratteli diagrams. Later on, it was shown in \cite{BKM} that a similar  result is true for aperiodic dynamical systems.
These facts allow us to find easily the clopen values set $S(\mu)$ for a substitution dynamical system in terms of the matrix of substitution (see \cite{S.B.} and  subsection \ref{Bratteli diagrams}). In order to construct a minimal substitution dynamical system which is orbit equivalent to a given one, $(X_\sigma, T_\sigma)$, (in other words, a simple stationary Bratteli diagram $B_\sigma$) one has to find another stationary simple Bratteli diagram $B$ such that the clopen values set $S(\mu)$ is kept unchanged where $\mu$ is a unique $T_\sigma$-invariant measure.  Moreover, if one wants to have a substitution dynamical system which is strongly orbit equivalent to  $(X_\sigma, T_\sigma)$, then additionally the dimension group of the diagram $B_\sigma$ must be unchanged. Of course, we are not interesting in the case when powers of $\sigma$ are considered since it leads trivially to conjugate  substitution systems.

We focus here on  the study of orbit equivalence of minimal substitution  dynamical system because aperiodic non-minimal substitution systems were considered before in  \cite{S.B.O.K.}. We note that the simplest case when the invariant measure $\mu$ has rational $S(\mu)$ and $\lambda$ is an integer was studied in \cite{Yuasa}.

The main results of the present paper are as follows. Let $(X_\sigma, T_\sigma)$ be  a minimal substitution  dynamical system and let $B_\sigma$ be a stationary simple Bratteli diagram corresponding to $(X_\sigma, T_\sigma)$.
We give an explicit  construction of countably many substitutions $\{\zeta_n\}_{n=1}^\infty$ defined on the Bratteli diagrams $\{B_n\}_{n=1}^{\infty}$, obtained by telescoping of $B_\sigma$, such that the systems $\{(X_{\zeta_n}, T_{\zeta_n})\}_{n=1}^\infty$ are strong orbit equivalent to $(X_\sigma, T_\sigma)$ and pairwise non-isomorphic. In the other construction, we build pairwise non-isomorphic orbit equivalent minimal substitution dynamical systems by using alphabets of different cardinality.

In both constructions, we use the complexity function $n\mapsto p_\sigma(n)$ to distinguish non-isomorphic systems. Recall that the function $p_\sigma(n)$ counts the number of words of length $n$ in the infinite sequence invariant with respect to $\sigma$. In the first construction, the incidence matrices of built substitution systems are the powers of $A$. In the case of fixed alphabet, the complexity function can be made increasing by enlarging the length of substitution and an appropriate permutation of letters. Using this method, we produce a countable family of pairwise non-isomorphic strong orbit equivalent substitution systems.
 In the second construction, the complexity of the systems is growing by increasing
the number of letters in the alphabet. In other words, for a proper substitution $\sigma$ defined on the alphabet $\mathcal{A}$, we find countably many proper substitutions $\{\zeta_n\}_{n=1}^\infty$ on the alphabets $\mathcal{A}_n$ of different cardinality such that $(X_\sigma, T_\sigma)$ is orbit equivalent to $(X_{\zeta_n}, T_{\zeta_n})$,
but the set $\{(X_{\zeta_n}, T_{\zeta_n})\}_{n=1}^\infty$ consists of  pairwise
non-isomorphic substitution dynamical systems.

Given the Bratteli-Vershik system on a simple stationary diagram $(B, \leq)$, we find an orbit equivalent stationary Bratteli-Vershik system with the least possible number of vertices. This number is the degree of the algebraic integer $\lambda$, the Perron-Frobenius eigenvalue of the transpose $A$ to the incidence matrix of $B$.

\section{Preliminaries}

\subsection{Minimal Cantor systems} A minimal Cantor system is a pair $(X,T)$ where $X$ is a Cantor space and $T \colon X \rightarrow X$ is a minimal homeomorphism, i.e. for every $x \in X$ the set $Orb_T(x) = \{T^n (x) \; |\; n \in \mathbb{Z}\}$ is dense in $X$.

Given a minimal Cantor system $(X,T)$ and a clopen  $A\subset X$, let  $r_A(x) = \min\{n \geq 1 : T^n(x) \in A\}$ be a continuous integer-valued map defined on  $A$. Then $T_A(x) = T^{r_A}(x)$ is a homeomorphism of $ A$, and a Cantor minimal system $(A, T_A)$ is called  \textit{induced} from $(X,T)$.

There are several notions of equivalence for minimal Cantor systems:

\begin{definition}
Let $(X,T)$ and $(Y,S)$ be two minimal Cantor systems. Then

(1) $(X,T)$ and $(Y,S)$ are \textit{conjugate} (or \textit{isomorphic}) if there exists a homeomorphism $h \colon X \rightarrow Y$ such that $h \circ T = S \circ h$.

(2) $(X,T)$ and $(Y,S)$ are \textit{orbit equivalent} if there exists a homeomorphism $h \colon X \rightarrow Y$ such that $h(Orb_T(x)) = Orb_S(h(x))$ for every $x \in X$. In other words, there exist functions $n, m \colon X \rightarrow \mathbb{Z}$ such that for all $x \in X$, $h \circ T(x) = S^{n(x)} \circ h(x)$ and $h \circ T^{m(x)} = S \circ h(x)$. The functions $n,m$ are called orbit cocycles associated to $h$.

(3)  $(X,T)$ and $(Y,S)$ are \textit{strong orbit equivalent} if they are orbit equivalent and each of the corresponding orbit cocycles has at most one point of discontinuity.

(4) $(X,T)$ and $(Y,S)$ are \textit{Kakutani equivalent} if they have conjugate induced systems.

(5) $(X,T)$ and $(Y,S)$ are \textit{Kakutani orbit equivalent} if they have orbit equivalent induced systems.
\end{definition}

A Cantor system is called \textit{uniquely ergodic} if it has a unique invariant probability measure. For a full non-atomic Borel measure $\mu$ on a Cantor space $X$, define the \textit{clopen values set} $S(\mu) = \{\mu(U) : U \mbox{ clopen in } X\}$. Let $(X_1,T_1)$ and $(X_2,T_2)$ be two uniquely ergodic minimal Cantor systems and $\mu_1$ and $\mu_2$ be the unique probability invariant measures for $T_1$ and $T_2$, respectively. Among other results on orbit equivalence, it is proved in \cite{GPS} that  $(X_1,T_1)$ and $(X_2,T_2)$ are orbit equivalent if and only if $S(\mu_1) = S(\mu_2)$.

\subsection{Bratteli diagrams}\label{Bratteli diagrams}

\begin{definition}
A \textit{Bratteli diagram} is an infinite graph $B = (V,E)$ such that the vertex set $V = \bigcup_{i\geq t 0}V_{i}$ and the edge set $E = \bigcup_{i\geq 1}E_{i}$ are partitioned into disjoint subsets $V_{i}$ and $E_{i}$ such that

(i) $V_{0} = \{v_{0}\}$ is a single point;

(ii) $V_{i}$ and $E_{i}$ are finite sets;

(iii) there exist a range map $r$ and a source map $s$ from $E$ to $V$ such that $r(E_{i}) = V_{i}$, $s(E_{i}) = V_{i-1}$, and $s^{-1}(v)\neq 0$, $r^{-1}(v')\neq 0$ for all $v \in V$ and $v' \in V \setminus V_{0}$.
\end{definition}

The pair $(V_{i}, E_{i})$ or just $V_i$ is called the $i$-th level of the diagram $B$.
A sequence of edges $(e_{i} : e_{i} \in E_{i})$ such that $r(e_{i}) = s(e_{i + 1})$ is called a \textit{path}. We denote by $X_{B}$ the set of all infinite paths starting at the vertex $v_{0}$. This set  is endowed  with the standard topology  turning $X_{B}$ into a Cantor set.

Given a Bratteli diagram $B = (V,E)$, define a sequence of incidence matrices  $F_{n} = (f_{vw}^{(n)})$ of $B$:
$f_{vw}^{(n)} = |\{e \in E_{n+1} : r(e) = v, s(e) = w\}|$,
where $v \in V_{n+1}$ and $w \in V_{n}$. Here and thereafter $|V|$ denotes the cardinality of the set $V$.
A Bratteli diagram is called \textit{stationary} if $F_{n} = F_{1}$ for every $n \geq 2$.

A Bratteli diagram $B' = (V',E')$ is called the \textit{telescoping} of a Bratteli diagram $B = (V,E)$ to a sequence $0 = m_0 < m_1 < ...$ if $V_n' = V_{m_n}$ and $E_n'$ is the set of all paths from $V_{m_{n-1}}$ to $V_{m_n}$, i.e. $E_n' = E_{m_{n-1}}\circ...\circ E_{m_n} = \{(e_{m_{n-1}},...,e_{m_n}) : e_i \in E_i, r(e_i) = s(e_{i+1})\}$.

Observe that every vertex $v \in V$ is connected to $v_{0}$ by a finite path, and the set $E(v_0,v)$ of all such paths is finite.
A Bratteli diagram is called \textit{simple} if for any $n > 0$ there exists $m > n$ such that any two vertices $v \in V_n$ and $w \in V_m$ are connected by a finite path.


A Bratteli diagram $B =(V,E)$ is called {\it ordered} if every set
$r^{-1}(v)$, $v\in \bigcup_{n\ge 1} V_n$, is linearly ordered. Given an ordered
Bratteli diagram $(B,\leq) = (V, E, \leq)$, any two paths from $E(v_0,v)$ are
comparable with respect to the  lexicographical order \cite{HPS}. We call a finite
or infinite path $e=(e_i)$ {\it maximal (minimal)} if every $e_i$ is maximal
(minimal) amongst the edges from $r^{-1}(r(e_i))$. A simple
ordered Bratteli diagram $(B,\leq)$ is \textit{properly ordered} if there are  unique
 maximal and minimal infinite  paths. Any simple stationary Bratteli diagram
can be properly ordered.
A Bratteli diagram $B=(V,E, \leq)$ is called {\it stationary ordered} if it is stationary and the partial linear order on $E_n$ does not depend on $n$.

Let $(B,\leq) = (V,E,\leq)$ be a simple properly ordered stationary Bratteli diagram. Define a minimal homeomorphism $\phi_B \colon X_B \rightarrow X_B$ as follows. Let $\phi_B(x_{\max}) = x_{\min}$. If $x = (x_1, x_2,...) \neq x_{\max}$, let $k$ be the smallest number so that $x_k$ is not a maximal edge. Let $y_k$ be the successor of $x_k$ (hence $r(x_k) = r(y_k)$). Set $\phi_B(x) = (y_1,...,y_{k-1}, y_k, x_{k+1}, x_{k+2},...)$, where $(y_{1},...,y_{k-1})$ is the minimal path in $E(v_0, s(y_k))$. The resulting minimal Cantor system $(X_B, \phi_B)$ is called a \textit{Bratteli-Vershik system}. If $(B', \leq')$ is a telescoping of $(B, \leq)$ which preserves the lexicographical order then the Bratteli-Vershik systems $(X_B, \phi_B)$ and $(X_{B'}, \phi_{B'})$ are isomorphic.

\begin{definition}
Let $B = (V,E)$ be a Bratteli diagram. Two infinite paths $x = (x_{i})$ and $y = (y_{i})$ from $X_{B}$ are called \textit{tail equivalent} if there exists $i_{0}$ such that $x_{i} = y_{i}$ for all $i \geq i_{0}$. Denote by $\mathcal{R}$ the tail equivalence relation on $X_{B}$.
\end{definition}

A Bratteli diagram is simple if the tail equivalence relation $\mathcal R$ is minimal.
Denote $X_{w}^{(n)}(\overline{e}) := \{x = (x_{i}) \in X_{B} : x_{i} = e_{i}, i = 1,...,n\}$, where  $\overline{e} = (e_{1}, \ldots ,e_{n}) \in E(v_{0}, w)$, $n \geq 1$. A measure $\mu$ on $X_B$ is called $\mathcal{R}$-\textit{invariant} if for any two paths $\overline{e}$ and $\overline{e}'$ from $E(v_{0}, w)$ and any vertex $w$, one has $\mu(X_{w}^{(n)}(\overline{e})) = \mu(X_{w}^{(n)}(\overline{e}'))$. The measure invariant for a stationary Bratteli-Vershik system is $\mathcal{R}$-invariant.

In the paper, we will consider only simple stationary Bratteli diagrams. Let $A = F^T$ be the matrix transpose to the incidence matrix of a diagram $B$. Let $\lambda$ be a Perron-Frobenius eigenvalue of $A$ and let $x =  (x_1,...,x_K)^T$ be the corresponding positive eigenvector such that $\sum_{i=1}^K x_i = 1$. Suppose $B$ has no multiple edges between levels 0 and 1. Then the ergodic probability  measure $\mu$ defined by $\lambda$ and $x$ satisfies the relation:
$$
\mu(X_{i}^{(n)}(\overline{e})) = \frac{x_i}{\lambda^{n - 1}},
$$
where $i\in V_n$ and $\overline e$ is a finite path with $s(\overline e) =i$. Therefore, the clopen values set for $\mu$ has has the form:
$$
S(\mu) = \left\{\sum_{i = 1}^{K} k^{(n)}_{i}\frac{x_i}{\lambda^{n - 1}} : 0  \leq k^{(n)}_{i} \leq h^{(n)}_{i}; \; n = 1, 2, \ldots \right\},
$$
where $h_{v}^{(n)} = |E(v_{0}, v)|$, $v \in V_{n}$.
Let $H(x)$ be an additive subgroup of $\mathbb{R}$ generated by $x_1,...,x_n$. Since the Bratteli-Vershik system is minimal and $\mu$ is a unique invariant measure, $\mu$ is good and $S(\mu) = \left(\bigcup_{N=0}^\infty \frac{1}{\lambda^N} H(x)\right) \cap [0,1]$ (see~\cite{Akin2, S.B.O.K.}). It is easy to see that $\lambda H(x) \subset H(x)$ and $\lambda^m \in H(x)$ for any $m \in \mathbb{N}$ (see \cite{S.B.O.K.}).

\subsection{Substitution dynamical systems} Let $\mathcal{A} = \{a_1,...,a_s\}$ be a finite alphabet. Let $\mathcal{A}^{*}$ be the collection of finite non-empty words over $\mathcal{A}$. Denote by $\Omega = \mathcal{A}^\mathbb{Z}$ the set of all two-sided infinite sequences on $\mathcal{A}$. A substitution $\sigma$ is a map $\sigma \colon \mathcal{A} \rightarrow \mathcal{A}^{*}$. It extends to maps $\sigma \colon \mathcal{A}^{*} \rightarrow \mathcal{A}^{*}$ and $\sigma \colon \Omega \rightarrow \Omega$ by concatenation.
Denote by $T$ the shift on $\Omega$: $T(...x_{-1}.x_0 x_1 ...) = ...x_{-1}x_0.x_1 ...$.

Let $A_{\sigma} = (a_{ij})_{i,j = 1}^s$ be the incidence matrix associated to $\sigma$  where $a_{ij}$ is the number of occurrences of $a_i$ in $\sigma(a_j)$. Clearly,  $A_{\sigma^n} = (A_\sigma)^n$ for every $n \geq 0$.
A substitution $\sigma$ is called \textit{primitive} if there is $n$ such that for each $a_i,a_j \in \mathcal{A}$, $a_j$ appears in $\sigma^n (a_i)$. Note that $\sigma$ is primitive if and only if $A_\sigma$ is a primitive matrix. If it happens that $|\sigma(a)| = q$ for any $a\in \mathcal A$, then the substitution $\sigma$ is called of constant length $q$.
For $x \in \Omega$, let $L_n(x)$ be the set of all words of length $n$ occurring in $x$. Set $L(x) = \bigcup_{n \in \mathbb{N}} L_n(x)$. The language of $\sigma$ is the set $L_\sigma$ of all finite words occurring in $\sigma^n(a)$ for some $n \geq 0,\; a \in \mathcal{A}$. Set $X_\sigma = \{x \in \Omega : L(x) \subset L_\sigma\}$.

Throughout this paper we will consider only primitive substitutions $\sigma$ such that $X_\sigma$ is a Cantor set.
The dynamical system $(X_\sigma, T_\sigma)$, where $T_\sigma$ is the restriction of $T$ to the $T$-invariant set $X_\sigma$, is called \textit{the substitution dynamical system} associated to $\sigma$. It is well known (see~\cite{Q}) that every primitive substitution generates a minimal and uniquely ergodic dynamical system.

The following statements can be found in~\cite{Q}. For every integer $p > 0$ the substitution $\sigma^p$ defines the same language as $\sigma$, hence the systems $(X_\sigma, T_\sigma)$ and $(X_{\sigma^p}, T_{\sigma^p})$ are isomorphic. Substituting $\sigma^p$ for $\sigma$ if needed, we can assume that there exist two letters $r, l \in \mathcal{A}$ such that $r$ is the last letter of $\sigma(r)$, $l$ is the first letter of $\sigma(l)$ and $rl \in L_\sigma$.
The sequence $\omega  = \lim_{n \rightarrow \infty} \sigma^n(r.l) \in X_\sigma$ is a fixed point of $\sigma$ (that is $\sigma(\omega) = \omega$) and  $\omega_{-1} = r$, $\omega_{0} = l$.  Then $X_\sigma = \overline{Orb_T(\omega)}$.

 The \textit{complexity} of $u \in \Omega$ is the function $p_u(n)$ which associates to each integer $n \geq 1$ the cardinality of $L_n(u)$. It is easy to see that

\begin{equation}\label{fund_equal}
p_u(k+1) - p_u(k) = \sum_{w \in L_k(u)}(\mbox{Card}\;\{a \in \mathcal{A}: wa \in L_{k+1}(u)\} - 1).
\end{equation}

The sequence $u$ is called \textit{minimal} if every word occurring in $u$ occurs in an infinite number of places with bounded gaps. A fixed point of a primitive substitution is always minimal (see~\cite{F, Q}). Let $X_u$ be the set of all sequences $x \in \Omega$ such that $L_n(x) = L_n(u)$ for every $n \in \mathbb{N}$. For a primitive substitution $\sigma$ with the fixed point $u$ we have $X_\sigma = X_u = \overline{Orb_T(u)}$.
Hence $p_x(n) = p_u(n)$ for every $n$ and every $x \in X_u$. Sometimes we will denote $p_u$ by $p_\sigma$ to stress that the complexity function is defined by $\sigma$.

The following results can be found in~\cite{C, F, Q}.
\begin{theorem}\label{complexity_function}
(1) If the symbolic systems $(X_u, T)$ and $(X_v, T)$ associated to minimal sequences $u$ and $v$ are topologically conjugate, then there exists a constant $c$ such that, for all $n > c$,
$$
p_u(n - c) \leq p_v(n) \leq p_u(n + c).
$$
Hence a relation $p_u(n) \leq a n^k + \bar{o}(n^k)$ when $n \rightarrow \infty$ is preserved by conjugacy (isomorphism).

(2) Let $\zeta$ be a primitive substitution and  $p$ the complexity function of a fixed sequence $u = \zeta(u)$. Then, there exists a constant $C > 0$ such that $p(n) \leq Cn$ for every $n \geq 1$.

(3) Let $u \in \Omega$ and $p$ be the complexity function of $u$. Suppose that there exists $a > 0$ such that $p(n) \leq an$ for all $n \geq n_0$. Then
$$
p(n+1) - p(n) \leq K s a^3
$$
for all $n \geq n_0$, where $K$ does not depend on $u$.
\end{theorem}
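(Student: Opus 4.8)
The plan is to treat the three parts by three separate classical arguments, and I would carry them out in turn.

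For part~(1) I would invoke the Curtis--Hedlund--Lyndon theorem: a topological conjugacy $h\colon X_u\to X_v$ and its inverse are continuous and commute with the shift, so on the compact zero-dimensional space $X_u$ (resp.\ $X_v$) each is given by a local rule --- there is an integer $r\ge 0$ with $(h(x))_0$ depending only on $(x_{-r},\dots,x_r)$, and symmetrically for $h^{-1}$. Consequently $h$ sends a factor of length $n+2r$ of $u$ to a well-defined factor of length $n$ of $v$, whence $p_v(n)\le p_u(n+2r)$, and applying the same to $h^{-1}$ gives $p_u(n)\le p_v(n+2r)$. With $c=2r$ and $n$ replaced by $n-c$ in the second estimate we get $p_u(n-c)\le p_v(n)\le p_u(n+c)$ for $n>c$. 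The preservation of a bound $p_u(n)\le an^k+\bar{o}(n^k)$ then follows at once, since $p_v(n)\le p_u(n+c)\le a(n+c)^k+\bar{o}((n+c)^k)=an^k+\bar{o}(n^k)$.

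For part~(2) I would reduce to a uniform bound on special factors. Call $w\in L_n(u)$ \emph{right-special} if $|\{a\in\mathcal{A}:wa\in L_{n+1}(u)\}|\ge 2$, and let $R_n$ be the number of such $w$; by~(\ref{fund_equal}) we have $p_u(n+1)-p_u(n)\le(s-1)R_n$, so if $R:=\sup_n R_n<\infty$ then $p_u(n)\le s+(s-1)(n-1)R=O(n)$ (all letters occur in $u$ by primitivity, so $p_u(1)=s$). The task is therefore to show $R<\infty$, and here I would use the desubstitution (recognizability) structure of the primitive substitution $\zeta$: the fixed point $u$ is, outside a bounded neighbourhood of the origin, uniquely a concatenation of blocks $\zeta(a)$, $a\in\mathcal{A}$; hence a sufficiently long factor $w$ has a canonical form $w=p\,\zeta(w'')\,q$ with $|p|,|q|<\max_a|\zeta(a)|$, $|w''|$ of order $|w|/\min_a|\zeta(a)|$, and right-specialness of $w$ forces $w''$ to be right-special up to a small, explicitly controlled correction coming from $p,q$ and the phase. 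Controlling this correspondence bounded-to-one as one iterates the desubstitution down to factors of bounded length yields the uniform bound on $R_n$. This combinatorial lemma --- that a primitive substitution has a uniformly bounded number of special factors of each length --- is the technical heart of~(2) and the step I expect to require the most care; primitivity is used both to guarantee recognizability and to ensure $X_\zeta=X_u$, so that the language of $u$ is exactly $L_\zeta$.

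For part~(3), which is purely combinatorial and uses neither a substitution nor minimality, set $s(n):=p(n+1)-p(n)\ge 0$. The hypothesis $p(n)\le an$ gives immediately a bound on the averages of $s$: for $m$ with $2m\ge n_0$, $\frac{1}{m}\sum_{k=m}^{2m-1}s(k)=\frac{p(2m)-p(m)}{m}\le\frac{2am}{m}=2a$. To upgrade this to the pointwise estimate $s(n)\le Ksa^3$ I would analyze the second difference $s(n+1)-s(n)$, which is classically expressible as a sum, over the bispecial (simultaneously left- and right-special) factors of length $n$, of their bilateral multiplicities; since $p$ is linearly bounded there are few bispecial factors, they form a forest under the factor relation, and a pigeonhole argument over this forest shows that $s$ cannot remain far above its average $2a$ over a long stretch of lengths without contradicting $p(n)\le an$. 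The only real obstacle in~(3) is the bookkeeping needed to extract the stated dependence --- cubic in $a$, linear in the alphabet size $s$, with $K$ absolute --- from this counting; I would follow Cassaigne's treatment of sequences of linear complexity here.
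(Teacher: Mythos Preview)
The paper does not prove this theorem at all: it is stated with the preamble ``The following results can be found in~\cite{C, F, Q}'' and is quoted as background from Cassaigne, Ferenczi, and Queff\'elec. So there is no in-paper proof to compare your proposal against.

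That said, your sketches are the standard arguments from those very references: part~(1) is exactly Ferenczi's sliding-block-code argument via Curtis--Hedlund--Lyndon; part~(2) is the classical route through a uniform bound on right-special factors (recognizability of primitive substitutions giving the bounded-to-one desubstitution); and part~(3) is Cassaigne's bispecial-factor analysis. Your identification of the delicate steps --- the combinatorial bookkeeping in~(2) to make the desubstitution correspondence bounded-to-one, and in~(3) to extract the explicit $Ksa^3$ constant --- is accurate, and these are precisely the places where the cited papers do the work. Nothing here diverges from, or improves upon, what the paper is citing.
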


\begin{definition}
A substitution $\sigma$ on an alphabet $\mathcal{A}$ is called \textit{proper} if there exists an integer $n > 0$ and two letters $a,b \in \mathcal{A}$ such that for every $c \in \mathcal{A}$, $a$ is the first letter and $b$ is the last letter of $\sigma^n(c)$.
\end{definition}

For every primitive substitution $\zeta$, there exists a proper substitution $\sigma$ such that the substitution systems $(X_{\zeta}, T_\zeta)$ and $(X_\sigma, T_\sigma)$ are isomorphic. The substitution $\sigma$ is built using the method of return words (see~\cite{DHS}).
The following theorem establishes the link between incidence matrices of $\zeta$ and $\sigma$:
\begin{theorem}\cite{Yuasa} \label{return_words}
Let $\zeta$ be a non-proper primitive substitution and let $\lambda$ be the Perron-Frobenius eigenvalue of its incidence matrix $A_\zeta$. Let $\sigma$ be the corresponding proper substitution built by means of return words. Then the Perron-Frobenius eigenvalue of $A_\sigma$ is $\lambda^k$ for some $k \in \mathbb{N}$.
\end{theorem}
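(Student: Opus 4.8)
The plan is to descend to the level of incidence matrices: the return-word construction of $\sigma$ carries with it a nonnegative integer matrix $M$ intertwining $A_\sigma$ with a power of $A_\zeta$, and the Perron--Frobenius eigenvalue of $A_\sigma$ is then forced by this intertwining. To set things up, recall from \cite{DHS} how $\sigma$ is built. Replacing $\zeta$ by a power if necessary (which replaces $\lambda$ by a power of $\lambda$ and merely contributes an extra factor to the exponent obtained below, without changing $X_\zeta$ or $\sigma$), we may assume $\zeta$ has a fixed point $u=\zeta(u)$. Choose the prefix $w$ of $u$ used in the construction, let $r_1,\dots,r_m$ be the return words to $w$, regard $\{1,\dots,m\}$ as the alphabet of $\sigma$, and let $\Theta\colon\{1,\dots,m\}^{*}\to\mathcal A^{*}$ be the coding morphism $\Theta(i)=r_i$. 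Then the derived sequence is the unique fixed point of $\sigma$, its $\Theta$-image is $u$, and --- the crucial feature of the construction --- there is an integer $k\ge 1$ with $\Theta\circ\sigma=\zeta^{\,k}\circ\Theta$ as morphisms on $\{1,\dots,m\}^{*}$: concretely, $\zeta^{k}(r_i)$ decomposes, necessarily uniquely, into a concatenation of return words to $w$, and $\sigma(i)$ is the corresponding word over $\{1,\dots,m\}$. (If the proper substitution of \cite{DHS} is obtained by iterating the return-word passage, the relations of the successive steps telescope into one relation of this form, with $k$ the product of the step exponents.)

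Now abelianize. Write $\ell$ for the map sending a word over $\mathcal A$ to its column vector of letter-counts, so that $\ell(\zeta^{k}(\cdot))=A_\zeta^{k}\ell(\cdot)$ by the definition of the incidence matrix; likewise on $\{1,\dots,m\}^{*}$ with $A_\sigma$. Let $M$ be the $s\times m$ nonnegative integer matrix whose $i$-th column is $\ell(r_i)$, so that $\ell\circ\Theta$ equals $M$ composed with abelianization on $\{1,\dots,m\}^{*}$. Abelianizing $\Theta\circ\sigma=\zeta^{k}\circ\Theta$ then gives the matrix identity
\[
M A_\sigma \;=\; A_\zeta^{\,k}\, M .
\]
Every return word is a nonempty word, so no column of $M$ vanishes. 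Let $\lambda_\sigma$ be the Perron--Frobenius eigenvalue of the primitive matrix $A_\sigma$ and let $y>0$ be a corresponding right eigenvector. Then $My\ge 0$, $My\neq 0$, and $A_\zeta^{k}(My)=MA_\sigma y=\lambda_\sigma(My)$, so $My$ is a nonnegative eigenvector of $A_\zeta^{k}$. Since $\zeta$ is primitive, $A_\zeta$, and hence $A_\zeta^{k}$, is a primitive matrix; by the Perron--Frobenius theorem its only nonnegative eigenvector up to scaling is the Perron eigenvector, whose eigenvalue is the spectral radius of $A_\zeta^{k}$, namely $\lambda^{k}$ (the eigenvalues of $A_\zeta^{k}$ being the $k$-th powers of those of $A_\zeta$). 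Hence $\lambda_\sigma=\lambda^{k}$, which is the assertion.

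The substantive step is the first one: verifying, at the level of finite words, that a suitable power of $\zeta$ sends every return word to $w$ onto a concatenation of return words to $w$, so that $\sigma$ and the relation $\Theta\circ\sigma=\zeta^{k}\circ\Theta$ are well defined. This is a recognizability argument for primitive substitutions and is exactly what underlies the construction in \cite{DHS}; granting it, the passage to incidence matrices and the Perron--Frobenius bookkeeping are routine. One must also keep track of the auxiliary powers of $\zeta$ --- the one securing a fixed point and any arising inside an iterated construction --- which all merge multiplicatively into the final exponent $k$.
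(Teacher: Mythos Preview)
The paper does not supply a proof of this theorem: it is stated with attribution to \cite{Yuasa} and used as a black box later on. So there is no ``paper's own proof'' to compare against.

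Your argument is correct and is essentially the natural one. The key input is the intertwining relation $\Theta\circ\sigma=\zeta^{k}\circ\Theta$ from the return-word construction of \cite{DHS}; once this is in place, abelianizing to $MA_\sigma=A_\zeta^{k}M$ and pushing the Perron eigenvector of $A_\sigma$ through $M$ is exactly the right move. Your observation that $My$ is a nonzero nonnegative eigenvector of the primitive matrix $A_\zeta^{k}$, hence must be the Perron eigenvector with eigenvalue $\lambda^{k}$, is clean and complete; in fact $My>0$ strictly, since every column of $M$ is nonzero and $y>0$. You also correctly track the auxiliary powers of $\zeta$ (to secure a fixed point, or from iterating the return-word step) and note that they simply multiply into the final exponent, which is all the statement requires.

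One small presentational point: you invoke primitivity of $A_\sigma$ without comment. This is indeed part of the Durand--Host--Skau package (the derived substitution is primitive), so it is legitimate to cite it alongside the intertwining relation; just make the dependence explicit.
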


Stationary Bratteli diagrams are naturally related to substitution dynamical
systems (primitive substitutions are considered in \cite{DHS}, \cite{Forrest},
and the non-primitive case is studied in \cite{BKM}). More precisely, let $(B,\leq)
  = (V,E, \leq)$ be a stationary ordered Bratteli diagram with no multiple edges
 between levels 0 and 1. Choose a stationary labeling of $V_n$ by an alphabet
$\mathcal A$: $V_n = \{ v_n(a) : a \in \mathcal A\},\ n >0$. For $a\in
\mathcal A$ consider the ordered set $(e_1,...,e_s)$ of edges that range at
$v_n(a), \ n\geq 2$. Let $(a_1,...,a_s)$ be the corresponding ordered set of the
 labels of the sources of these edges. The map $a \mapsto a_1\cdots a_s$
from $\mathcal A$ to $\mathcal A^*$ does not depend on $n$ and determines
a substitution called the substitution read on $(B,\leq)$.
Conversely, for any substitution dynamical system we can build the corresponding ordered stationary Bratteli diagram.
The following theorem, proved in~\cite{DHS}, shows the link between simple Bratteli diagrams and primitive substitution dynamical systems.

\begin{theorem}
Let $(B,\leq)$ be a stationary, properly ordered Bratteli diagram with only simple edges between the top vertex and the first level. Let $\sigma$ be the substitution read on $(B,\leq)$.

(i) If $\sigma$ is aperiodic, then the Bratteli-Vershik system $(X_B, \phi_B)$ is isomorphic to the substitution dynamical system $(X_\sigma, T_\sigma)$.

(ii) If $\sigma$ is periodic, the Bratteli-Vershik system $(X_B, \phi_B)$ is isomorphic to a stationary odometer.

\end{theorem}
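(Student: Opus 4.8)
The plan is to realize the isomorphism by an explicit \emph{coding map}. First note that a properly ordered diagram is simple, so the incidence matrix $F$ --- and hence $A = F^{T}$, which is the incidence matrix of the substitution read on $(B,\le)$ --- is primitive; thus $\sigma$ is a primitive substitution and $(X_\sigma, T_\sigma)$ is minimal (and uniquely ergodic). Since there are only simple edges between $v_0$ and $V_1$, the labeling $V_1 = \{v_1(a): a \in \mathcal A\}$ is a bijection, so to every path $x = (x_1, x_2, \dots) \in X_B$ we may attach the letter $f(x) \in \mathcal A$ labeling the vertex $r(x_1) \in V_1$. Define $\pi \colon X_B \to \mathcal A^{\mathbb Z}$ by $\pi(x)_k = f(\phi_B^{k}(x))$. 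The map $x \mapsto f(x)$ is locally constant and each $\phi_B^{k}$ is a homeomorphism, so $\pi$ is continuous, and by construction $\pi \circ \phi_B = T \circ \pi$.

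Next I would show $\pi(X_B) \subseteq X_\sigma$; this is where the definition of ``the substitution read on $(B,\le)$'' is used. For $v \in V_n$ let $\mathcal T_n(v)$ be the Kakutani--Rokhlin tower whose floors are the finite paths in $E(v_0, v)$ listed in lexicographic order, on which $\phi_B$ acts by moving one floor up (away from the top floor). By the definition of $\sigma$ and of the lexicographic order, refining $\mathcal T_n(v_n(a))$ from level $n$ down to level $n-1$ expresses it as the concatenation $\mathcal T_{n-1}(v_{n-1}(a_1)) \cdots \mathcal T_{n-1}(v_{n-1}(a_s))$, where $\sigma(a) = a_1 \cdots a_s$; iterating down to level $1$ (where each $\mathcal T_1(v_1(b))$ is a single floor labeled $b$) shows that reading the $\pi$-labels up the tower $\mathcal T_n(v_n(a))$, whose height is $h^{(n)}_{v_n(a)} = |\sigma^{n-1}(a)|$, spells exactly the word $\sigma^{n-1}(a) \in L_\sigma$. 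Hence for any $x \in X_B$ every finite subword of $\pi(x)$ occurs inside some $\sigma^{n-1}(a)$ --- the block coming from the level-$n$ tower through which $x$ passes --- because simplicity of $B$ forces $h^{(n)}_v \to \infty$; therefore $\pi(x) \in X_\sigma$. Surjectivity is then immediate: $\pi(X_B)$ is a nonempty closed $T$-invariant subset of the minimal system $(X_\sigma, T_\sigma)$, so $\pi(X_B) = X_\sigma$.

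Finally, injectivity. Tracking $x$ through the level-$n$ tower as above, $\pi(x)$ contains the word $\sigma^{n-1}(a_n)$, where $v_n(a_n) = r(x_n)$, placed so that its $j_n$-th letter occupies coordinate $0$, with $j_n$ the floor index of $x$ in that tower; and knowing the pairs $(a_n, j_n)$ for all $n$ recovers $x$, since the cylinders $[x_1 \cdots x_n]$ generate the topology of $X_B$. So $\pi$ is injective precisely when these pairs can be read back from the bi-infinite word $\pi(x)$, i.e. when for each large $n$ there is a unique parsing of $\pi(x)$ into blocks of the form $\sigma^{n-1}(b)$, $b \in \mathcal A$, with a well-defined block covering the origin. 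When $\sigma$ is aperiodic this is exactly \emph{recognizability} (Moss\'e's theorem for aperiodic primitive substitutions), which I would invoke; granting it, $\pi$ is a continuous bijection of compact metric spaces, hence a homeomorphism conjugating $\phi_B$ and $T_\sigma$, which proves (i). When $\sigma$ is periodic, $X_\sigma$ is a single finite orbit and $\pi$ collapses $X_B$ onto it, so instead one argues directly: every tower $\mathcal T_n(v_n(a))$ is obtained by stacking copies of a single fixed periodic word, so $\phi_B$ is conjugate to an inverse limit of finite cyclic rotations on the tower bases, i.e. a stationary odometer, which gives (ii). The only deep point --- and the place I expect the real work to lie --- is the recognizability input for injectivity in the aperiodic case; the inclusion $\pi(X_B) \subseteq X_\sigma$ is bookkeeping about telescoping towers, surjectivity is a one-line minimality argument, and the periodic case is elementary.
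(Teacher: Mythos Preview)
The paper does not prove this theorem; it is quoted as a result of Durand--Host--Skau \cite{DHS} and used as a black box. So there is no ``paper's own proof'' to compare against. That said, your sketch is essentially the standard argument one finds in \cite{DHS}: the factor map $\pi$ is exactly their coding map, the tower/concatenation bookkeeping showing $\pi(X_B)\subseteq X_\sigma$ is the same, surjectivity by minimality is the same, and injectivity in the aperiodic case is indeed obtained from Moss\'e's bilateral recognizability. One small remark: you write ``a properly ordered diagram is simple'' as if it were a consequence, but in this paper (and in \cite{DHS}) simplicity is part of the \emph{definition} of properly ordered, so that step is just unpacking definitions. Your treatment of the periodic case~(ii) is the weakest part of the sketch: the assertion that $\phi_B$ becomes an inverse limit of cyclic rotations needs the observation that, once $\sigma$ is periodic of period $p$, the tower heights $h^{(n)}_v$ are all equal to the common value $|\sigma^{n-1}(a)|$ modulo~$p$ in the right sense, and one then identifies $(X_B,\phi_B)$ with the odometer on $\varprojlim \mathbb Z/h^{(n)}\mathbb Z$; the stationarity of that odometer comes from the stationarity of $B$. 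This is not hard but is more than ``stacking copies of a single periodic word,'' and \cite{DHS} spells it out.
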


In~\cite{Yuasa}, the following result was proved:

\begin{theorem}\label{oeodometer}
Let $\sigma$ be a primitive substitution whose incidence matrix has natural Perron-Frobenius eigenvalue. Then $(X_\sigma, T_\sigma)$ is orbit equivalent to a stationary odometer system.
\end{theorem}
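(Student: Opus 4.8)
The approach is to invoke the criterion of \cite{GPS} recalled above: two uniquely ergodic minimal Cantor systems are orbit equivalent if and only if their clopen values sets coincide. Both $(X_\sigma,T_\sigma)$ and any odometer are minimal and uniquely ergodic, so it suffices to build a stationary odometer $(Y,S)$ whose unique invariant measure $\nu$ satisfies $S(\nu)=S(\mu)$, where $\mu$ is the unique $T_\sigma$-invariant measure.

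First I would write $S(\mu)$ explicitly. Let $B_\sigma$ be the stationary Bratteli diagram read from $\sigma$; by construction it has simple edges between levels $0$ and $1$, so the description of $S(\mu)$ from Subsection~\ref{Bratteli diagrams} applies. Let $A=F^T$ be the transpose of its incidence matrix, let $\lambda$ be its Perron--Frobenius eigenvalue (a natural number by hypothesis, and $\lambda\ge 2$ since $X_\sigma$ is infinite), and let $x=(x_1,\dots,x_K)^T>0$ be the Perron eigenvector normalized by $\sum_i x_i=1$. Then $S(\mu)=\big(\bigcup_{N\ge 0}\lambda^{-N}H(x)\big)\cap[0,1]$, where $H(x)$ is the subgroup of $\mathbb{R}$ generated by $x_1,\dots,x_K$.

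The key step uses $\lambda\in\mathbb{N}$. As $\sigma$ is primitive, $A$ is primitive and $\lambda$ is a simple eigenvalue, so $\ker(A-\lambda I)$ is one--dimensional; being the solution space of an integer homogeneous linear system, it is spanned by a rational vector, hence $x$ has rational coordinates. Writing $x_i=p_i/d$ in lowest terms (so $\gcd(p_1,\dots,p_K,d)=1$), the relation $\sum_i p_i=d$ forces $\gcd(p_1,\dots,p_K)=1$, whence $H(x)=\tfrac1d\mathbb{Z}$ and $S(\mu)=\tfrac1d\mathbb{Z}[1/\lambda]\cap[0,1]$. Factor $d=d_0d_1$ with $d_0$ the largest divisor of $d$ all of whose prime divisors divide $\lambda$ and with $\gcd(d_1,\lambda)=1$; since $1/d_0\in\mathbb{Z}[1/\lambda]$, this simplifies to $S(\mu)=\tfrac1{d_1}\mathbb{Z}[1/\lambda]\cap[0,1]$. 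Note that $d_1>1$ can genuinely occur, so the answer is in general not the plain $\lambda$--adic clopen values set $\mathbb{Z}[1/\lambda]\cap[0,1]$.

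Finally I would exhibit the odometer: take the stationary Bratteli diagram $B_0$ with one vertex at each level $n\ge 1$, with $d_1$ edges from $V_0$ to $V_1$ and $\lambda$ edges from $V_n$ to $V_{n+1}$ for every $n\ge 1$, and order the edges at each vertex by $0<1<\dots$. This is a simple, properly ordered stationary diagram whose Vershik map is the $(d_1,\lambda,\lambda,\dots)$--adic odometer $(Y,S)$, a stationary odometer (stationarity only constrains the incidence matrices $F_n$ for $n\ge 1$, so a nontrivial initial incidence $[d_1]$ is allowed). Its unique invariant measure $\nu$ gives mass $(d_1\lambda^{n-1})^{-1}$ to each cylinder over a path from $v_0$ to level $n$, and since clopen sets are finite disjoint unions of such cylinders, $S(\nu)=\{k(d_1\lambda^{n-1})^{-1}:0\le k\le d_1\lambda^{n-1},\ n\ge 1\}=\tfrac1{d_1}\mathbb{Z}[1/\lambda]\cap[0,1]=S(\mu)$; here $S(\nu)$ is computed directly, not via Subsection~\ref{Bratteli diagrams}, whose formula assumes simple edges between levels $0$ and $1$. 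By the \cite{GPS} criterion, $(X_\sigma,T_\sigma)$ and $(Y,S)$ are orbit equivalent. There is no genuinely hard step here: the one point requiring care is exactly the bookkeeping of the prime divisors of $d$ that do not divide $\lambda$, which is what obliges the target to be a stationary odometer with first incidence $[d_1]$ rather than the $\lambda$--adic odometer.
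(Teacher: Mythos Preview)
The paper does not give its own proof of this theorem; it is quoted as a result of Yuasa~\cite{Yuasa}. So there is no in-paper argument to compare against, and your proposal should be judged on its own merits.

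Your argument is the natural one and is essentially correct: once $\lambda\in\mathbb{N}$ forces the Perron eigenvector to be rational, $H(x)=\frac1d\mathbb{Z}$ and $S(\mu)=\frac1{d_1}\mathbb{Z}[1/\lambda]\cap[0,1]$ follow, and the $(d_1,\lambda,\lambda,\dots)$--adic odometer realizes exactly this clopen values set; the \cite{GPS} criterion then finishes. The bookkeeping with $d=d_0d_1$ is handled correctly, and your remark that the first incidence vector may be $[d_1]\neq[1]$ while the diagram remains stationary is apt.

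There is one point to tighten. You write ``Let $B_\sigma$ be the stationary Bratteli diagram read from $\sigma$; by construction it has simple edges between levels $0$ and $1$, so the description of $S(\mu)$ from Subsection~\ref{Bratteli diagrams} applies.'' That formula computes the clopen values set of the Vershik system on $B_\sigma$, and this coincides with $S(\mu)$ for $(X_\sigma,T_\sigma)$ only when $\sigma$ is \emph{proper} (via the \cite{DHS} isomorphism). The statement, however, is for an arbitrary primitive $\sigma$. The fix is the standard one used later in the paper (proof of Theorem~\ref{minBD}): invoke Theorem~\ref{return_words} to replace $\sigma$ by a proper $\zeta$ with $(X_\zeta,T_\zeta)\cong(X_\sigma,T_\sigma)$ and Perron eigenvalue $\lambda^k$; since $\lambda\in\mathbb{N}$ implies $\lambda^k\in\mathbb{N}$, your computation of $S(\mu)$ and the construction of the matching odometer go through unchanged with $\lambda^k$ in place of $\lambda$. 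With that one sentence added, the proof is complete.
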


We will need the next result  (see~\cite{DHS, Forrest}):
\begin{theorem}\label{mult_edges_thm}
Let $(B,\leq)$ be a stationary properly ordered Bratteli diagram. Then there exists a stationary properly ordered Bratteli diagram $(B', \leq')$ such that $B'$ has no multiple edges between levels 0 and 1 and the systems $(X_B, \phi_B)$, $(X_{B'}, \phi_{B'})$ are isomorphic.
\end{theorem}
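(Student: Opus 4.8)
This is the standard reduction lemma of \cite{DHS, Forrest}; let me describe how I would carry it out. The plan is to keep the system $(X_B,\phi_B)$ fixed and to replace $(B,\leq)$ by an equivalent stationary properly ordered diagram in which every vertex of the first level is joined to $v_0$ by a single edge. Write $m_v=|E(v_0,v)|$ for $v\in V_1$ and let $F$ be the stationary incidence matrix of $B$. Since an order-preserving telescoping of $(B,\leq)$ produces an isomorphic Bratteli-Vershik system (see subsection~\ref{Bratteli diagrams}), it suffices to produce \emph{one} stationary properly ordered $(B',\leq')$ with $|E(v_0,v')|=1$ for every $v'\in V_1'$ and with $(X_{B'},\phi_{B'})$ conjugate to $(X_B,\phi_B)$; in fact I would aim for the stronger assertion that $(B,\leq)$ and $(B',\leq')$ admit a common order-preserving telescoping, which then yields the conjugacy for free.

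I would not try simply to erase the multiple bottom edges: the integers $m_v$ enter every height $h_v^{(n)}$, hence the order unit of the dimension group of $B$, so this data has to resurface somewhere in $B'$, and the natural place is between its first two levels, at the cost of enlarging the alphabet. Concretely, I would regard the level-$1$ Kakutani-Rokhlin partition of $X_B$ as split into sub-towers indexed by the edges of $E_1$, take for $V_1'$ one vertex $v(e)$ for each $e\in E_1$ (joined to $v_0$ by a single edge), and then choose the edges from $V_1'$ to $V_2'$ together with the (enlarged) stationary matrix $F'$ governing all levels $\geq 2$ so that, after a suitable telescoping of $B'$, the tower over $v(e)$ at the first level reproduces exactly the portion of the level-$1$ tower of $B$ over $r(e)$ cut out by the edge $e$, while from level $2$ on the vertices $v(e)$ with $r(e)=v$ feed into the ``copy of $v$'' as prescribed by $F$. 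I would then equip $B'$ with the order transported from $(B,\leq)$: the bottom edges of $B'$ ordered as the corresponding elements of $E_1$, the orders at higher levels inherited from $B$.

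The hard part will be the combinatorial bookkeeping that makes this work: $V_1'$ and the edges out of it must be arranged so that (a) $B'$ is \emph{exactly} stationary from level $2$ onwards --- the shortcut of altering only $F_0$ and $F_1$ is illegitimate, since the stationarity convention used here forces $F_1=F_2=\cdots$; (b) the transported order on $B'$ is proper, i.e.\ there remain a unique maximal and a unique minimal infinite path; and (c) the common order-preserving telescoping of $(B,\leq)$ and $(B',\leq')$ really exists, equivalently their two sequences of Kakutani-Rokhlin partitions agree after one shift of levels. Once (a) is arranged, (b) and (c) should follow from the standard telescoping and conjugacy lemmas for ordered Bratteli diagrams of \cite{HPS, DHS, Forrest}. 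Finally I would treat separately the degenerate case in which the substitution read on $(B,\leq)$ is periodic: there $(X_B,\phi_B)$ is an odometer, and one must check that the construction reproduces the right (in general non-constant-base) scale --- which it does, because it preserves the numbers $\sum_{v\in V_n}h_v^{(n)}$ of finite paths from $v_0$ to level $n$.
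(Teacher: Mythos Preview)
The paper does not supply its own proof of this statement: it is quoted as a known result with the attribution ``see~\cite{DHS, Forrest}'' and is used as a black box later in Remark~\ref{mult_edges}. So there is nothing in the paper to compare your argument against beyond the citation.

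That said, your plan is the standard vertex-splitting construction and is on the right track. A couple of remarks on points you flagged yourself. First, for (a) the usual way to force exact stationarity is to take $V_n'=\{v(e):e\in E_1\}$ for \emph{every} $n\geq 1$ and to define the single stationary matrix $F'$ by declaring, for $e,e'\in E_1$, that the number of edges from $v(e')$ to $v(e)$ equals the number of edges $f\in E_2$ with $s(f)=r(e')$, $r(f)=r(e)$ and such that the first edge of the minimal path in $E(v_0,s(f))$ is $e'$ --- in effect you are reading the substitution on the alphabet $E_1$ rather than on $V_1$. With this definition the order transported from $(B,\leq)$ is automatically stationary and proper, and the obvious level-by-level bijection of paths gives the conjugacy; no separate telescoping step is needed. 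Second, your worry about the periodic case is legitimate but harmless: the construction above is purely combinatorial and does not use aperiodicity, so the odometer case is covered without a separate argument.
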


\section{Orbit equivalence class for a primitive substitution}

Given a primitive proper substitution $\sigma$, we build countably many pairwise non-isomorphic substitution dynamical systems $\{(X_{\zeta_n}, T_{\zeta_n})\}_{n=1}^{\infty}$ in the orbit equivalence class of $(X_{\sigma}, T_{\sigma})$. Two essentially different constructions are elaborated. In the first one, we obtain countably many strong orbit equivalent substitution systems defined on the same alphabet. The second construction produces countably many orbit equivalent substitution systems with increasing cardinality of alphabets.
Finally, given a primitive (not necessarily proper) substitution $\tau$, we find a stationary simple properly ordered Bratteli diagram with the least possible number of vertices such that the corresponding Bratteli-Vershik system is orbit equivalent to $(X_{\tau}, T_{\tau})$.


\begin{theorem}
Let $(B, \leq)$ be a stationary properly ordered simple Bratteli diagram. Let $\sigma$ be the substitution read on $(B, \leq)$. Then there exist countably many telescopings $B_n$ of $B$ with proper orders $\leq_n$ and corresponding substitutions $\zeta_n$ read on $B_n$ such that the substitution dynamical
systems $\{(X_{\zeta_n}, T_{\zeta_n})\}_{n=1}^\infty$ are pairwise non-isomorphic and strong orbit equivalent to $(X_\sigma, T_\sigma)$.
\end{theorem}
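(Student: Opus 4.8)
I would build the family in three stages. First, for each large $n$ telescope $B$ so as to obtain a stationary simple properly ordered diagram $B_n$ whose incidence matrices are $A^n$, but equip $B_n$ with a \emph{non-canonical} proper order $\leq_n$. Second, check that the substitution $\zeta_n$ read on $(B_n,\leq_n)$ is primitive and aperiodic and that $(X_{\zeta_n},T_{\zeta_n})$ is strong orbit equivalent to $(X_\sigma,T_\sigma)$. Third — and this is the real content — choose the orders $\leq_n$ so that the complexity functions $p_{\zeta_n}$ are pairwise incompatible in the sense of Theorem~\ref{complexity_function}(1), which forces the systems to be pairwise non-isomorphic.

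\textbf{Stage~1.} I may assume $B$ has no multiple edges between levels $0$ and $1$ (otherwise replace it first via Theorem~\ref{mult_edges_thm}). For $n\geq 1$ let $B_n$ be the telescoping of $B$ to the sequence $0<1<1+n<1+2n<\cdots$; its incidence matrix between any two consecutive levels $\geq 1$ is $F^n$, where $F=A^{T}$, it has no multiple edges between levels $0$ and $1$, it is simple, and from level~$1$ on it is stationary with matrix $F^n$, so the notion ``substitution read on $B_n$'' makes sense and produces a substitution with incidence matrix $(F^n)^{T}=A^n$. Since $A$ is primitive, $A^n$ is strictly positive for all $n$ beyond some $n_0$; restricting to such $n$, for any chosen distinct letters $b_n,e_n\in\mathcal A$ I can pick a \emph{stationary} order $\leq_n$ on $B_n$ whose minimal edge at every vertex issues from $v(b_n)$ and whose maximal edge at every vertex issues from $v(e_n)$. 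As with ordinary substitution diagrams, such an order has a unique maximal infinite path (the constant path through $v(e_n)$) and a unique minimal one, so $(B_n,\leq_n)$ is properly ordered; equivalently the substitution $\zeta_n$ read on $(B_n,\leq_n)$ is proper, with $A_{\zeta_n}=A^n$.

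\textbf{Stage~2.} The path space $X_{B_n}$ is a Cantor set, so $\zeta_n$ is aperiodic, and hence by the theorem of~\cite{DHS} linking properly ordered stationary diagrams with substitutions, $(X_{B_n},\phi_{B_n})$ is isomorphic to $(X_{\zeta_n},T_{\zeta_n})$. Telescoping does not change the dimension group with distinguished order unit, and neither does changing the order on a fixed Bratteli diagram, because that group is the inductive limit determined by the incidence matrices alone. Thus all $(X_{\zeta_n},T_{\zeta_n})$ share the ordered $K_0$-group with order unit of $(X_B,\phi_B)\cong(X_\sigma,T_\sigma)$, so by~\cite{GPS} they are all strong orbit equivalent to $(X_\sigma,T_\sigma)$; this stage is routine.

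\textbf{Stage~3 and the main obstacle.} With the order inherited from $\leq$ one would get $\zeta_n=\sigma^n$, conjugate to $\sigma$; the freedom I exploit is that a different stationary proper order $\leq_n$ permutes the letters inside each word $\zeta_n(a)$ while keeping the column $A^n e_a$ of letter counts fixed, and therefore changes the set of right-special factors of the fixed point $u_n=\zeta_n(u_n)$ — hence, by \eqref{fund_equal}, the increments $p_{\zeta_n}(k+1)-p_{\zeta_n}(k)$. Since $\zeta_n$ is primitive, $p_{\zeta_n}$ is bounded linearly (Theorem~\ref{complexity_function}(2)–(3)), so $\alpha_n:=\limsup_k p_{\zeta_n}(k)/k$ is finite and, by Theorem~\ref{complexity_function}(1), is a conjugacy invariant: a conjugacy between $(X_{\zeta_m},T_{\zeta_m})$ and $(X_{\zeta_n},T_{\zeta_n})$ would give $c$ with $p_{\zeta_m}(k-c)\leq p_{\zeta_n}(k)\leq p_{\zeta_m}(k+c)$ for all large $k$, forcing $\alpha_m=\alpha_n$. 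So it suffices to arrange that $n\mapsto\alpha_n$ takes infinitely many distinct values; then passing to a subsequence along which it is strictly increasing and relabelling produces the required countable family of pairwise non-isomorphic, strong orbit equivalent systems. The hard part is exactly this: showing that enlarging the length of the substitution (from $A^n$ to $A^{n+1}$) together with a suitable permutation of letters can be made to strictly raise $\alpha_n$ — that is, that one can force genuinely new right-special factors which survive into the eventual, self-similar range of $k$ (of size $\sim\lambda^n$) rather than appearing only in the transient range. This is a Pansiot-type analysis of the special (and bispecial) factors of $\zeta_n$ in terms of the chosen rearrangement, and it is where the work concentrates; Stages~1–2 merely package the orbit-equivalence side.
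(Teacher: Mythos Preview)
Your Stages~1 and~2 are correct and match the paper: telescope to get incidence matrix $A^N$, impose a new proper stationary order, and note that the dimension group with order unit is unchanged, so strong orbit equivalence follows from~\cite{GPS}. But Stage~3 is where the argument stops: you correctly identify that it suffices to make $\alpha_n=\limsup_k p_{\zeta_n}(k)/k$ arbitrarily large by choice of the order, yet you do not construct any order $\leq_n$ achieving this. The phrase ``Pansiot-type analysis of the special and bispecial factors'' is a description of what would have to be checked, not a proof; since this step is the entire content of the theorem, the proposal is incomplete as it stands.

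The paper fills the gap with a short explicit device rather than any structural classification. Fix $l$ and list all $s^l$ words $\omega_1,\dots,\omega_{s^l}$ of length $l$ over $\mathcal A$. Take $N$ so large that the letter counts in the columns of $A^N$ permit one to arrange each $\zeta(a_j)$ so that (i) it begins with $a_1a_j$ and ends with $a_1$, and (ii) $\zeta(a_1)$ contains every $\omega_i a_j$ as a factor. Condition (ii) already gives $p_\zeta(k)=s^k$ for $k\le l+1$. Condition (i) makes $\zeta$ proper and forces $\zeta^m(a_j)$ to begin with $\zeta^{m-1}(a_1)\cdots\zeta(a_1)a_1a_j$; hence inside $\zeta^{m+1}(a_1)$ the $s^l$ distinct words $\zeta^m(\omega_i)\zeta^{m-1}(a_1)\cdots a_1$ are each followed by all $s$ letters, so by~\eqref{fund_equal} one obtains $p_\zeta(k+1)-p_\zeta(k)\ge s^l(s-1)$ along a geometric sequence of values of $k$ (with $s^l$ replaced by $s^r$, $r$ a fixed fraction of $l$, in the non-constant-length case via the standard bound $D_m/d_m\le M_2/M_1$). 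Combined with Theorem~\ref{complexity_function}(1) and~(3), this forces $\alpha_\zeta\ge (s^{l-1}(s-1)/K)^{1/3}$, which can be pushed past any prescribed constant by taking $l$ large; iterating the construction produces the countable family. The idea you were missing is therefore not a bispecial-factor classification but the explicit \emph{packing} of all length-$l$ words into $\zeta(a_1)$ together with the ``$a_1a_j$-prefix'' trick, which manufactures $s^l$ right-special factors at every scale of the self-similar structure.
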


\begin{proof}
Let $\mathcal{A} = \{a_1,...,a_s\}$ be the alphabet for $\sigma$.
Fix a number  $l \in \mathbb{N}$. Let $\{\omega_r\}_{r=1}^{s^l}$ denote the set of all possible words of length $l$ over the alphabet $\mathcal{A}$. Take arbitrary $N \in \mathbb N$ ($N$ will be chosen below) and consider the telescoping $B_N$ of $B$ with incidence matrix $A^N$. In our construction, we will define a proper substitution $\zeta = \zeta(l)$ that is read on the Bratteli diagram $B_N$ whose incidence matrix is $A_\zeta = A^N$. This means that the number of occurrences  of any letter $a_i$ in $\zeta(a_j)$ is known but we are free to choose any order of letters in the word $\zeta(a_j)$. In other words, we will change the lexicographical order $\leq_{\mbox lex}$, that obviously determines $\sigma^N$, in order to define $\zeta$. We take $N$ sufficiently large to guarantee that $\zeta$ satisfies the following conditions:

(1) for all $1 \leq j \leq s$ the word $\zeta(a_j)$ starts with the word $a_1 a_j$ and ends with the letter $a_1$;

(2) the word $\zeta(a_1)$ contains as subwords all words $\{\omega_i a_j\}$ for $1 \leq i \leq s^l$ and $1 \leq j \leq s$.

Obviously, it follows that  $\zeta$ is a proper substitution and the two substitution dynamical systems, $(X_{\zeta}, T_\zeta)$ and $(X_\sigma, T_\sigma)$, are strongly orbit equivalent  because the dimension groups associated to these minimal Cantor systems (that is to the diagrams $B$ and $B_N$) are order isomorphic by a map preserving the distinguished order unit (see~\cite{GPS}).

We need to show that for an appropriate choice of $l$ the substitution  $\zeta = \zeta(l)$ is such that the  systems  $(X_{\zeta(l)}, T_{\zeta(l)})$ and $(X_\sigma,T_\sigma)$  are not isomorphic. We see that  $\zeta^{\infty}(a_1.a_1) = \lim_{n\rightarrow \infty} \zeta^n(a_1.a_1)$ is a fixed point. The parameter $l$ should be  now chosen in such a way that  the complexity function $p_\zeta$ of  $\zeta^{\infty}(a_1.a_1)$  grows essentially faster then the complexity function associated to $(X_\sigma,T_\sigma)$.

To clarify the idea of the proof, we first prove the theorem in the case when $\sigma$ is a substitution of constant length $q$. Then $\zeta$ is a substitution of length $q^N$.

By definition of $\zeta$, we have $p_\zeta(1) = s$, $p_\zeta(2) = s^2$ and $p_\zeta(k) = s^k$ for $1 \leq k \leq l + 1$.
The word $\zeta^2(a_1)$ contains the words $\zeta(\omega_i) \zeta(a_j)$ for $1 \leq i \leq s^l$ and $1 \leq j \leq s$.
Recall that $\zeta(a_j)$ starts with $a_1 a_j$. Thus, $\zeta^2(a_1)$ contains $s^l$
different words $\{\zeta(\omega_i) a_1\}_{i = 1}^{s^l}$ and each word can be followed
 by any letter from $\mathcal{A}$. Since $lq+1 = |\zeta(\omega_i)| + |a_1|$, we
obtain $p_\zeta(lq+2) - p_\zeta(lq+1) \geq s^l(s-1)$ by (\ref{fund_equal}).

Consider $\zeta^3(a_1)$. Then apply the previous arguments with $\zeta(a_i)$ instead of $a_i$. Since $|\zeta^2(\omega_i)| + |\zeta(a_1)| + |a_1| = lq^2+ q + 1$, we get $p_\zeta(lq^2 + q + 2) - p_\zeta(lq^2+ q +1)\geq s^l(s-1)$. Thus, we conclude by induction that  for all $m \in \mathbb{N}$
 $$p_\zeta(lq^m + \sum_{i=0}^{m-1}q^i + 1) - p_\zeta(lq^m + \sum_{i=0}^{m-1}q^i)\geq s^l(s-1).
 $$
 Taking $l$ large enough, we can make the difference $p_\zeta(k+1) - p_\zeta(k)$ arbitrary large for infinite number of values of $k$.
 Now if we assumed that the substitution systems $(X_{\zeta}, T_{\zeta})$ and $(X_\sigma,T_\sigma)$  are isomorphic, then we would have the following relation that follows from  Theorem~\ref{complexity_function}: (i) there exists $C > 0$ such that $p_\sigma(n) \leq Cn$ for all $n \geq 1$ and  $p_\zeta(n) \leq (C+1)n$ for sufficiently large $n$; (ii) $p_\zeta(n+1) - p_\zeta(n) \leq Ks(C+1)^3$ for all sufficiently large $n$, where $K$ is a ``universal'' constant. Clearly, these statements contradict to the proved above fact that the values of $p_\zeta(n+1) - p_\zeta(n)$ are unbounded.

To prove the theorem in the general case, denote $q_m = |\zeta^m(a_1)|$ for $m \in \mathbb{N}$. Let $D_m = \max\limits_{1 \leq i \leq s}|\zeta^m(a_i)|$ and $d_m = \min\limits_{1 \leq i \leq s}|\zeta^m(a_i)|$.
Since $|\omega_i| = l$, we have $l d_m \leq |\zeta^m(\omega_i)| \leq l D_m$ for any $1 \leq i \leq s^l$ and $m \geq 1$.
 The matrix $A_\zeta$ is strictly positive, hence there exist positive constants $M_1$, $M_2$ such that for every $m \geq 1$ we have $M_1 \lambda^m \leq d_m \leq D_m \leq M_2 \lambda^m$, where $\lambda$ is the Perron-Frobenius eigenvalue of $A_\zeta$ (see~\cite{Q}). Hence $\frac{D_m}{d_m} \leq \frac{M_2}{M_1}$ for all $m \geq 1$.
For $r > 0$, set $l = l(r) = \left(\left[\frac{M_2}{M_1}\right] + 1\right) r$.
Then $l \geq r \frac{M_2}{M_1} \geq r \frac{D_m}{d_m}$ and $l d_m \geq r D_m$ for all $m \geq 1$. Thus, $\{\zeta^m(\omega_i)\}_{i=1}^{s^l}$ contains at least $s^r$ different suffices of length $D_m r$ and each suffix can be followed by any word from $\{\zeta^m(a_j)\}_{j=1}^s$. By the same argument as in the case of substitution of constant length, we conclude that
 $$p_\zeta(D_m r + \sum_{i=0}^{m-1}q_i + 1) - p_\zeta(D_m r +  \sum_{i=0}^{m-1}q_i) \geq s^r(s-1).
$$
By (\ref{fund_equal}), we obtain the needed result.

Thus, $p_\sigma(n) \leq Cn$ for all $n \geq 1$ but for any $N \in \mathbb{N}$ there exists $M > N$ such that $p_\zeta(M) > (C+1)M$.
Set $\zeta_1 = \zeta$. There exists $C_1 > 0$ such that $p_{\zeta_1}(n) \leq C_1 n$ for every $n \geq 1$. By the same method as above, we construct $\zeta_{m+1}$ using $\zeta_m$ for $m \in \mathbb{N}$. We obtain $p_{\zeta_{m}}(n) \leq C_{m} n$ for $n \geq 1$ and for any $N \in \mathbb{N}$ there exists $M > N$ such that $p_{\zeta_{m+1}}(M) > (C_m+1)M$.
Hence, by Theorem~\ref{complexity_function}, we obtain countably many pairwise non-isomorphic substitution dynamical systems $\{(X_{\zeta_m}, T_{\zeta_m})\}_{m=1}^\infty$ in the strong orbit equivalence class of $(X_\sigma, T_\sigma)$.
\end{proof}

In contrast to the first construction where the cardinality of the alphabet was fixed and this led us to a class of strongly orbit equivalent substitution systems, we will consider now a class of substitution dynamical systems defined on the alphabets of variable cardinality.

For a primitive matrix $A \in Mat(\mathbb{N},s)$ with the Perron-Frobenius eigenvalue $\lambda$ and the corresponding normalized eigenvector $x$, denote by $S(A) = \bigcup_{N=0}^\infty \frac{1}{\lambda^N} H(x)$.

\begin{lemma}\label{matrixbuild}
Let $A \in Mat(\mathbb{N},s)$ be a primitive matrix. Then there exist primitive matrices $\{A_n\}_{n=1}^{\infty}$, where $A_n \in Mat(\mathbb{N},s+n)$ such that $S(A) = S(A_n)$ for all $n \in \mathbb{N}$.
\end{lemma}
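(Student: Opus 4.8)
The plan is to prove the statement by induction on $n$: it suffices to pass from a primitive $A\in Mat(\mathbb N,s)$ to a primitive $A_1\in Mat(\mathbb N,s+1)$ with $S(A_1)=S(A)$, and then to iterate this step, each time feeding in the matrix just produced. The construction of $A_1$ is a controlled duplication of a single vertex. The issue around which everything turns is that duplicating a vertex forces a renormalization of the Perron--Frobenius eigenvector --- its coordinates must again sum to $1$ --- and $\bigcup_N\lambda^{-N}H(x)$ is \emph{not} preserved if $H(x)$ is scaled by a factor that is not a power of $\lambda$; so the duplication has to be arranged so that this renormalizing factor is exactly a power of $\lambda$, which is invisible to $S(\cdot)$.

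First I would telescope. Since $A^m$ has Perron eigenvalue $\lambda^m$ and the same Perron eigenvector $x$, and since $\lambda H(x)\subseteq H(x)$, one has $S(A^m)=\bigcup_N\lambda^{-mN}H(x)=\bigcup_N\lambda^{-N}H(x)=S(A)$; so, without changing the number of vertices, I may replace $A$ by $A^m$ for $m$ large enough that every entry of $A^m$ is $\ge 2$ and $\lambda^mx_1>1$ (possible because $A$ is primitive and $\lambda>1$, which we may assume, the case $\lambda=1$ being the excluded trivial one $A=(1)$). Write $\Lambda=\lambda^m$ and keep calling the telescoped matrix $A$. Next I would factor $A=SR$ with
$$
S=\big(\,I_s\ \big|\ e_1\,\big)\in Mat\big(\mathbb N,\,s\times(s+1)\big),\qquad
R=\begin{pmatrix} A-e_1\mathbf 1^T\\ \mathbf 1^T\end{pmatrix}\in Mat\big(\mathbb N,\,(s+1)\times s\big)
$$
(the entries of $R$ are nonnegative because the first row of $A$ is $\ge 1$), and set $A_1:=RS$; explicitly, $A_1$ is $A$ with each first-row entry lowered by one, with the so-modified first column adjoined as a new last column and a row of $1$'s as a new last row. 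Then I would check the routine facts: $A_1$ is strictly positive (here ``all entries of $A$ are $\ge 2$'' is used), hence primitive; using $SR=A$, $Ax=\Lambda x$ and $\mathbf 1^Tx=\sum_ix_i=1$ one computes that $Rx=(\Lambda x_1-1,\Lambda x_2,\dots,\Lambda x_s,1)^T$ is a positive eigenvector of $A_1$ with eigenvalue $\Lambda$ (positivity needs $\Lambda x_1>1$), so by Perron--Frobenius $\Lambda$ is the Perron eigenvalue of $A_1$ and $Rx$ its Perron eigenvector. Its coordinates sum to $\Lambda$, so the normalized Perron eigenvector of $A_1$ is $z=(x_1-\Lambda^{-1},x_2,\dots,x_s,\Lambda^{-1})^T$ and $H(z)=H(x)+\mathbb Z\,\Lambda^{-1}$.

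It remains to see that $S(A_1)=\bigcup_N\Lambda^{-N}H(z)$ equals $S(A)$ --- the step the construction was engineered to make work. Since $1=\sum_ix_i\in H(x)$, we have $\Lambda^{-1}\in\Lambda^{-1}H(x)\subseteq S(A)$, so $H(z)=H(x)+\mathbb Z\Lambda^{-1}\subseteq S(A)$; as $S(A)$ is invariant under multiplication by $\Lambda^{-1}$ (it is invariant under multiplication by $\lambda^{-1}$ by definition), this gives $S(A_1)\subseteq S(A)$. Conversely $H(x)\subseteq H(z)$ yields $\bigcup_N\Lambda^{-N}H(x)\subseteq\bigcup_N\Lambda^{-N}H(z)=S(A_1)$, and $\bigcup_N\Lambda^{-N}H(x)=\bigcup_N\lambda^{-mN}H(x)=S(A)$ by the same telescoping identity as above; hence $S(A_1)=S(A)$. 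Iterating ``telescope, then duplicate one vertex'' $n$ times produces primitive matrices $A_n\in Mat(\mathbb N,s+n)$ with $S(A_n)=S(A)$. I expect the main obstacle to be precisely the tension resolved here: to keep $A_1$ primitive one is pushed to make the new vertex richly connected, hence to telescope until the entries of $A$ are large; while to keep $S$ unchanged one needs the renormalizing factor $\sum_i(Rx)_i$ to be a power of $\lambda$ and not some arbitrary element of $H(x)$ --- which is why the last column of $S$ must be the bare coordinate vector $e_1$, forcing that sum to equal $\Lambda$.
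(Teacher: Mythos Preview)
Your proof is correct. Both you and the paper proceed by the same inductive scheme --- build $A_1\in Mat(\mathbb N,s+1)$ with $S(A_1)=S(A)$, then iterate --- but the construction of $A_1$ is genuinely different. The paper adjoins a new coordinate $\lambda-1$ to the eigenvector (noting $\lambda-1=\sum_j x_j(\sum_i a_{ij}-1)\in H(x)$), and writes down an explicit $(s+1)\times(s+1)$ matrix whose upper-left block is $A^k$ with each column shifted by $\sum_i a_{ij}-1$, last column $(1,\dots,1,0)^T$, and last row built from the column sums of $A^{k+1}-A^k$; one checks directly that $A_1y=\lambda^k y$ for $y=(x_1,\dots,x_s,\lambda-1)^T$. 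You instead use the state-splitting factorization $A=SR$, $A_1=RS$, which makes the eigenvector transfer $Rx$ automatic and the verification of $A_1(Rx)=\Lambda\,Rx$ a one-line consequence of $SR=A$. In both cases the essential point --- which you isolate explicitly --- is that the sum of the new eigenvector's coordinates is exactly $\lambda$ (respectively $\Lambda=\lambda^m$), so the renormalization needed to pass to a probability eigenvector is division by a power of $\lambda$, invisible to $S(\cdot)$. Your route is slightly more conceptual and ties the construction to a standard device from symbolic dynamics; the paper's is more hands-on, with the small bonus that $H(y)=H(x)$ on the nose (yours gives $H(z)=H(x)+\mathbb Z\Lambda^{-1}$, which then needs the extra line $\Lambda^{-1}\in\Lambda^{-1}H(x)$). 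Either way the lemma follows.
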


\begin{proof}  Let $\lambda$ be the Perron-Frobenius eigenvalue of $A$ and $x = (x_1,...,x_s)^T$ the corresponding normalized eigenvector. We construct an $(s+1) \times (s+1)$ matrix $A_1$ such that $A_1$ satisfies the condition of the lemma, and $y = (x_1,...,x_s, \lambda - 1)^T$ is the Perron-Frobenius eigenvector of $A_1$. Since $x$ is the normalized eigenvector of $A$, we have $\sum_{j = 1}^s a_{ij}x_j = \lambda x_i$ and $\sum_{i = 1}^s \sum_{j = 1}^s a_{ij}x_j = \lambda$. Then
$$
\lambda - 1 = \sum_{j = 1}^s x_j \left(\sum_{i = 1}^s  a_{ij}  - 1\right) \in H(x_1,...,x_s).
$$
 Denote by $a_{ij}^{(k)}$ the entries of the matrix $A^k$. We have
 $$
 \lambda^k x_i = \sum_{j = 1}^s a_{ij}^{(k)}x_j = \sum_{j = 1}^s \left(a_{ij}^{(k)} - (\sum_{l = 1}^s  a_{lj}  - 1)\right)x_j + \lambda - 1.
 $$
 Clearly, all the coefficients $a_{ij}^{(k)} - (\sum_{l = 1}^s  a_{lj}  - 1)$ are positive integers for sufficiently large $k$. We have $\lambda^k (\lambda - 1) = \sum_{j=1}^s(\sum_{i=1}^s (a_{ij}^{(k+1)} - a_{ij}^{(k)}))x_j$. It is obvious that the numbers $\sum_{i=1}^s (a_{ij}^{(k+1)} - a_{ij}^{(k)})$ are positive integers for all $k \in \mathbb{N}$ and $1 \leq j \leq s$. Hence we define
$$
A_1 =
\begin{pmatrix}
a_{11}^{(k)} - (\sum_{i = 1}^s  a_{i1}  - 1) & \ldots & a_{1n}^{(k)} - (\sum_{i = 1}^s  a_{i,n}  - 1) & 1\\
\\
\vdots & \ddots & \vdots & \vdots\\
\\
a_{n1}^{(k)} - (\sum_{i = 1}^s  a_{i1}  - 1) & \ldots & a_{nn}^{(k)} - (\sum_{i = 1}^s  a_{i,n}  - 1) & 1\\
\\
\sum_{i=1}^s (a_{i1}^{(k+1)} - a_{i1}^{(k)})& \ldots & \sum_{i=1}^s (a_{i,n}^{(k+1)} - a_{i,n}^{(k)}) & 0
\end{pmatrix}.
$$
It is straightforward to check that $A_1 y = \lambda^k y$.
Since $\lambda - 1 \in H(x_1,...,x_s)$, we have $H(y_1,...,y_{s+1}) = H(x_1,...,x_s)$. The normalized Perron-Frobenius eigenvector of $A_1$ is $z = \frac{1}{\lambda} y$. Since $\lambda H(x) \subset H(x)$, we see that $S(A) = S(A_1)$.

To complete the proof, we note that the construction of $A_{n+1}$ uses $A_n$ in the same way as the construction of $A_1$ uses $A$.
\end{proof}

\begin{theorem}\label{diff_alphabets}
Let $\sigma$ be a proper substitution. Then there exist countably many proper substitutions $\{\zeta_n\}_{n=1}^\infty$ such that $(X_\sigma, T_\sigma)$ is orbit equivalent to $(X_{\zeta_n}, T_{\zeta_n})$, but the systems $\{(X_{\zeta_n}, T_{\zeta_n})\}_{n=1}^\infty$ are pairwise non-isomorphic.
\end{theorem}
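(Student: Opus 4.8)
The plan is to realize the desired systems on alphabets of cardinality $s+1, s+2, \dots$, obtained by feeding the incidence matrix of $\sigma$ into Lemma~\ref{matrixbuild} and then choosing a suitable proper order on the associated stationary Bratteli diagrams. Orbit equivalence will follow from the coincidence of the clopen values sets together with the criterion of \cite{GPS}, and pairwise non-isomorphism from the complexity estimate established in the proof of the first theorem of this section.

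Here is the construction I would carry out. Let $\mathcal A=\{a_1,\dots,a_s\}$ be the alphabet of $\sigma$ and $A=A_\sigma$ its incidence matrix, with Perron--Frobenius eigenvalue $\lambda$ and normalized eigenvector $x$, so $S(\mu_\sigma)=S(A)\cap[0,1]$. Since $\sigma$ is primitive, $A$ is primitive, and Lemma~\ref{matrixbuild} supplies primitive matrices $A_n\in Mat(\mathbb N,s+n)$ with $S(A_n)=S(A)$. The $\zeta_n$ are built recursively. Fix a constant $C_{n-1}$ with $p_{\zeta_j}(k)\le C_{n-1}k$ for all $k\ge1$ and all $1\le j\le n-1$ (Theorem~\ref{complexity_function}(2); take $C_0=1$), and choose an integer $l_n$ large enough that the complexity-jump lower bound recalled below (which tends to infinity as $l_n\to\infty$) exceeds $K(s+n)(C_{n-1}+1)^3$, with $K$ the constant of Theorem~\ref{complexity_function}(3). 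As $A_n$ is primitive, $A_n^k$ eventually becomes strictly positive with all entries tending to infinity, so for $k=k_n$ large enough the $a_1$-column of $A_n^{k_n}$ dominates coordinatewise the letter-count vector of the concatenation of all $(s+n)^{l_n}$ words of length $l_n$ over $\mathcal A_n=\{a_1,\dots,a_{s+n}\}$ (with two copies of $a_1$ adjoined), and every column of $A_n^{k_n}$ has $a_1$-entry at least $2$. Hence, for each $b\in\mathcal A_n$ I can choose a word $\zeta_n(b)$ whose letter counts are the $b$-column of $A_n^{k_n}$, which begins and ends with $a_1$, and with $\zeta_n(a_1)$ containing every word of length $l_n$ over $\mathcal A_n$ as a factor. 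Prescribing these words amounts to fixing a stationary order $\le_n$ on the stationary Bratteli diagram $B_n$ with single edges between levels $0$ and $1$ and incidence matrix $A_n^{k_n}$; since every $\zeta_n(b)$ has first and last letter $a_1$, this order is proper, so $\zeta_n$ is the proper substitution read on $(B_n,\le_n)$, which is moreover aperiodic (its subshift being infinite), and $(X_{\zeta_n},T_{\zeta_n})=(X_{B_n},\phi_{B_n})$.

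It then remains to verify the two properties. \emph{Orbit equivalence:} passing to a power leaves the clopen values set unchanged, $S(A_n^{k_n})=S(A_n)$, because $\lambda_nH(x_n)\subseteq H(x_n)$ for the Perron--Frobenius data $(\lambda_n,x_n)$ of $A_n$ (as recorded after the clopen-values-set formula); hence the unique invariant measure $\mu_n$ of $(X_{B_n},\phi_{B_n})$ has $S(\mu_n)=S(A_n^{k_n})\cap[0,1]=S(A)\cap[0,1]=S(\mu_\sigma)$, and by \cite{GPS} the system $(X_{\zeta_n},T_{\zeta_n})$ is orbit equivalent to $(X_\sigma,T_\sigma)$ --- only orbit, not strong orbit, equivalence, since the dimension group of $A_n$ need not coincide with that of $A$. \emph{Non-isomorphism:} since $\zeta_n(a_1)$ contains all words of length $l_n$ over its $(s+n)$-letter alphabet and each $\zeta_n(b)$ begins with $a_1$, the computation in the proof of the first theorem of this section carries over with $s$ replaced by $s+n$ and shows, for every $i\ge1$, that $p_{\zeta_i}(k+1)-p_{\zeta_i}(k)>K(s+i)(C_{i-1}+1)^3$ for infinitely many $k$. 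If $(X_{\zeta_m},T_{\zeta_m})$ were isomorphic to $(X_{\zeta_n},T_{\zeta_n})$ for some $n<m$, then Theorem~\ref{complexity_function}(1) would give a $c$ with $p_{\zeta_m}(k)\le p_{\zeta_n}(k+c)\le C_{m-1}(k+c)\le(C_{m-1}+1)k$ for all large $k$ (using $n\le m-1$), so by Theorem~\ref{complexity_function}(3) the jumps of $p_{\zeta_m}$ would be bounded by $K(s+m)(C_{m-1}+1)^3$ for all large $k$, contradicting the previous statement with $i=m$. Hence the $\{(X_{\zeta_n},T_{\zeta_n})\}_{n\ge1}$ are pairwise non-isomorphic.

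The step I expect to be the real obstacle is the middle of the construction: checking that the rigidly prescribed letter-frequencies --- a fixed column of $A_n^{k_n}$ --- can indeed be arranged into one word that is simultaneously admissible for a proper order and rich enough to force the complexity growth. This is precisely where the Perron--Frobenius lower bounds on the entries of $A_n^k$ provide the required room, and where one uses, as in the first construction, that ``first and last letter $a_1$'' pins down the unique minimal and maximal paths. Everything else --- the invariance of $S(\cdot)$ under Lemma~\ref{matrixbuild} and under powers, and the complexity dichotomy of Theorem~\ref{complexity_function} --- is routine.
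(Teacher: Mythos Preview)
Your argument is valid, but it takes a genuinely different route from the paper's. Instead of fixing the alphabet sizes as $s+1,s+2,\dots$ and then importing the heavy complexity estimate from the first theorem of the section (embedding all length-$l_n$ words into $\zeta_n(a_1)$, passing to a large power $A_n^{k_n}$, and invoking Theorem~\ref{complexity_function}(3) on the jump sizes), the paper lets the alphabet size itself do the work. It jumps directly to an alphabet of cardinality $C+2$, where $C$ is chosen with $p_\sigma(n)\le Cn$, via Lemma~\ref{matrixbuild}. On an alphabet that large, the single requirement that each $\zeta(a_j)$ begin with $a_1a_j$ and end with $a_1$ already yields $p_\zeta(n+1)-p_\zeta(n)\ge C+1$ for every $n$ (the word $\zeta^{k-1}(a_1)\cdots\zeta(a_1)a_1$ can be followed by any of the $C+2$ letters), hence $p_\zeta(n)>(C+1)n$ for all $n$; Theorem~\ref{complexity_function}(1) alone then separates $\zeta$ from $\sigma$. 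Iterating with $\zeta$ in place of $\sigma$ and $C_1$ in place of $C$ produces the sequence $\{\zeta_m\}$. So the paper never embeds all length-$l$ words, never needs powers beyond what is required for the entries to be at least $2$, and never uses part~(3) of Theorem~\ref{complexity_function}. Your approach trades a coarser choice of alphabet for a heavier complexity lemma; the paper's trades a rapidly growing alphabet for a one-line complexity bound.

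One small repair your version needs: to carry over the first theorem's computation verbatim you must have $\zeta_n(a_j)$ begin with $a_1a_j$, not merely with $a_1$. That second letter is what makes $\zeta_n$ injective on words (so the $\zeta_n(\omega_i)$ are genuinely distinct) and what guarantees that each prefix $\zeta_n(\omega_i)a_1$ admits all $s+n$ continuations. Your stated hypothesis ``every column of $A_n^{k_n}$ has $a_1$-entry at least $2$'' does not by itself secure this (and for $j=1$ you need $a_1$-entry at least $3$); both defects are of course repaired by taking $k_n$ a little larger.
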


\begin{proof}  Let $A$ be the incidence matrix of substitution $\sigma$ defined on an alphabet $\{a_1,..., a_s\}$. Let $\lambda$ be the Perron-Frobenius eigenvalue of $A$ and $x = (x_1,...,x_s)^T$ be the normalized Perron-Frobenius eigenvector.
By Theorem~\ref{complexity_function}, there exists $C > 0$ such that $p_\sigma(n) \leq Cn$ for every $n \geq 1$. We can assume $C \in \mathbb{N}$ and $C+1 > s$. By Lemma~\ref{matrixbuild}, there exists a primitive matrix $\widetilde{A} \in Mat(\mathbb{N},C + 2)$ such that $S(\widetilde{A}) = S(A)$.
We define a primitive substitution $\zeta$ on the alphabet $\{a_1,...,a_{C+2}\}$ such that $A_\zeta = \widetilde{A}$. We can always assume that $(\widetilde{A})_{1i} \geq 2$, for $i = 1,...,C+2$, otherwise we would take the power of $\widetilde{A}$ instead of $\widetilde{A}$.
We require also that the word $\zeta(a_j)$ starts with the letters $a_1 a_j$ and ends with the letter $a_1$ for all $j = 1,...,C+2$. Then $\zeta$ is a proper primitive substitution.

Denote by $u = \lim_{n \rightarrow \infty} \zeta^n(a_1.a_1)$ the unique fixed point for $\zeta$.
Since $\zeta$ is primitive, the sequence $u$ contains $\zeta^k(a_j) = \zeta^{k-1}(a_1)\zeta^{k-2}(a_1)...\zeta(a_1)a_1 a_j...a_1$ as a subword for $k \geq 1$ and $j = 1,..,C+2$. We have $p_u(1) = C + 2$. Since all letters $\{a_j\}_{j=1}^{C+2}$ can follow the word $\zeta^{k-1}(a_1)\zeta^{k-2}(a_1)...\zeta(a_1)a_1$ for any $k$, we have $p_u(n+1) - p_u(n) \geq C + 1$ for $n \geq 1$ by (\ref{fund_equal}). It follows that $p_u(n) > (C+1)n$ for all $n \geq 1$. Hence, by Theorem~\ref{complexity_function}, the systems $(X_\sigma, T_\sigma)$ and $(X_{\zeta}, T_\zeta)$ are not isomorphic.

We will apply induction to produce a needed sequence of substitutions. For $\zeta_1 := \zeta$, there exists $C_1 > 0$ such that $p_{\zeta_1}(n) \leq C_1 n$ for every $n \geq 1$. By the same method as above, we can construct $\zeta_{m+1}$ using $\zeta_m$ for $m \in \mathbb{N}$ such that $S(A_{\zeta_m}) = S(A)$. We obtain $C_m n \leq p_{\zeta_{m+1}}(n) \leq C_{m+1} n$ for $m \in \mathbb{N}$ and $n \geq 1$. Hence, by Theorem~\ref{complexity_function} and Lemma \ref{matrixbuild}, we obtain countably many pairwise non-isomorphic substitution dynamical systems $\{(X_{\zeta_m}, T_{\zeta_m})\}_{m=1}^\infty$ in the orbit equivalence class of $(X_\sigma, T_\sigma)$.
\end{proof}

Now, given any primitive substitution, we find a stationary simple properly ordered Bratteli diagram with the least possible number of vertices such that the corresponding dynamical systems are orbit equivalent. This result is, in some sense, relative to Theorem~\ref{oeodometer}.
We recall some notions and results from~\cite{S.B.O.K.}.

Let $A \in Mat(\mathbb{N},s)$ be a primitive matrix. Let $\lambda$ be a Perron-Frobenius eigenvalue of $A$ and $x = (x_1,...,x_s)^T$ be the normalized Perron-Frobenius eigenvector. Denote by $k$ the degree of the algebraic integer $\lambda$. The number field $\mathbb{Q}(\lambda)$ is a subfield of $\mathbb{R}$ whose elements are written down as $\{a_0 + a_1 \lambda + ... + a_{k-1}\lambda^{k-1} : a_0,a_1,...,a_{k-1} \in \mathbb{Q}\}$.  The numbers $1, \lambda, ... ,\lambda^{k-1}$ form a basis of $\mathbb{Q}(\lambda)$ as a vector space over $\mathbb{Q}$.
If we need to emphasize that a real number $y = a_0 + a_1 \lambda + \ldots + a_{k-1} \lambda^{k-1} \in \mathbb{Q}(\lambda)$ is considered as a vector $(a_0, a_1,...,a_{k-1})^T \in  \mathbb{Q}^k$, we will use the notation $\textbf y$. Let $\textbf{n}$ denote the vector $(1, \lambda, ..., \lambda^{k-1})^T \in \mathbb{R}^k$. Then, for any $\textbf{y}\in \mathbb Q^k$, the corresponding number $y \in \mathbb{R}$ can be written as $y = \langle \textbf{y}, \textbf{n}\rangle$.

Let $B$ be a stationary simple Bratteli diagram with incidence matrix $F = A^T$ and no multiple edges between levels 0 and 1. Let $\mu$ be its unique ergodic probability $\mathcal{R}$-invariant measure. Let $G(S(\mu))$ be an additive subgroup of reals generated by $S(\mu)$. It is not hard to see that $S(A) = G(S(\mu)) \subset \mathbb{Q}(\lambda)$.

Let $p(\lambda)$ be the polynomial in $\mathbb Q(\lambda)$ such that $\lambda^{-1}= p(\lambda)$. The map $y \mapsto p(\lambda)y$ in $\mathbb Q(\lambda)$ determines a linear transformation in the vector space $\mathbb{Q}^k$. Let $D$ be the matrix which corresponds to this transformation.
The following results can be found in the proof of Theorem 3.2 in~\cite{S.B.O.K.}.
The matrix $D \in Mat(\mathbb{Q}, k)$ is a nonsingular matrix and for any $\textbf{x} \in \mathbb{Q}^k$ the vector $D\textbf{x} \in \mathbb{Q}^k$ corresponds to the number $\frac{x}{\lambda} \in \mathbb{R}$.
Let $C = D^{-1}$. The matrix $C$ has only one eigenvector $\textbf{y}_1$ with eigenvalue $\lambda$, the absolute value of any other eigenvalue of $C$ is less than $\lambda$, and $\textbf{y}_1$ is the only eigenvector of $C$ that is not orthogonal to $\textbf{n}$. Obviously, we can assume $\langle\textbf{y}_1,\textbf{n}\rangle > 0$. Denote by $\pi = \{\textbf y \in \mathbb{R}^k: \langle \textbf{y}, \textbf{n}\rangle = 0\}$. The iterations of $C$ drive any ray which is not in $\pi$ to the limit ray generated by $\textbf{y}_1$, the iterations of $D = C^{-1}$ do the opposite thing. More precisely, if $\langle \textbf{y}, \textbf{n}\rangle \neq 0$ for some $\textbf{y} \in \mathbb{Q}^k$ then angle between the line generated by $D^N \textbf{y}$ and $\pi$ can be made arbitrary small when $N$ tends to infinity.

\begin{remark}\label{nec_cond_lambda}
For $A \in Mat(\mathbb{N},s)$, let $\lambda$ be the Perron-Frobenius eigenvalue
for $A$ and $x = (x_1,....,x_s)^T$ be the corresponding eigenvector. Then $\deg
\lambda = \deg\lambda^m$ for all $m \in \mathbb{N}$. Indeed, since the Perron-Frobenius eigenvalue of $A^m$ is $\lambda^m$ and $A^m x =\lambda^m x$, we have $\deg \lambda^m = \dim(Lin_{\mathbb{Q}}\{\textbf{x}_1,...,\textbf{x}_k\}) = \deg \lambda$.
\end{remark}

\begin{lemma}\label{min_matrix}
Let $A \in Mat(\mathbb{N},s)$ be a primitive matrix. Let $\lambda$ be the Perron-Frobenius eigenvalue of $A$. Let $k \geq 2$ be the degree of algebraic number $\lambda$. Then there exists a primitive matrix $\widetilde{A} \in Mat(\mathbb{N},k)$ such that $S(\widetilde{A}) = \alpha S(A)$ for some positive $\alpha \in \mathbb{Q}(\lambda)$. Moreover, $k$ is the least possible dimension for which this equality holds.
\end{lemma}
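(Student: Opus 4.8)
# Proof Proposal for Lemma~\ref{min_matrix}

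\textbf{Overview of the approach.} The plan is to realize the field $\mathbb{Q}(\lambda)$ concretely as $\mathbb{Q}^k$ via the basis $1,\lambda,\ldots,\lambda^{k-1}$, and to produce the matrix $\widetilde A$ by a ``change of coordinates'' argument built on the matrix $C = D^{-1}$ recalled just before the statement. Recall $C$ acts on $\mathbb{Q}^k$, its unique eigenvalue of maximal modulus is $\lambda$ with eigenvector $\textbf{y}_1$ (the only eigenvector not orthogonal to $\textbf{n}$), and multiplication by $D$ corresponds to dividing a number of $\mathbb{Q}(\lambda)$ by $\lambda$. The key point is that $C$ (or rather a conjugate of it by a suitable rational basis change, followed by passing to a high power) can be turned into a \emph{nonnegative integer primitive} matrix while keeping track of what happens to the relevant subgroup of reals.

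\textbf{Key steps, in order.} First I would fix the identification: write $S(A) = G(S(\mu)) \subset \mathbb{Q}(\lambda)$ as recalled above, so $S(A)$ is a subgroup of the $k$-dimensional $\mathbb{Q}$-vector space $\mathbb{Q}(\lambda)$, and note $\lambda S(A) \subset S(A)$, $\lambda^{-m} \in$ (the group generated by) $S(A)$ for appropriate interpretation — more precisely $S(A) = \bigcup_N \lambda^{-N} H(x)$ and $H(x)$ is a finitely generated subgroup of $\mathbb{Q}(\lambda)$ of full rank $k$ (full rank because $\deg\lambda = k$ forces $x_1,\ldots,x_s$ to span a $k$-dimensional $\mathbb{Q}$-space, as in Remark~\ref{nec_cond_lambda}). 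Second, I would choose a convenient $\mathbb{Z}$-lattice $L \subset \mathbb{Q}(\lambda)$ with $\lambda L \subseteq L$: for instance scale so that $1,\lambda,\ldots,\lambda^{k-1}$ all lie in $L$ and $L$ is $\lambda$-invariant (one can take $L = \mathbb{Z}[\lambda]$ after clearing denominators in the minimal polynomial, i.e. work with an algebraic integer $c\lambda$ for suitable $c\in\mathbb{N}$; this is where the factor $\alpha\in\mathbb{Q}(\lambda)$ enters). Third, the multiplication-by-$\lambda$ map on this lattice is given by the companion-type matrix $M$ of the (monic integer) minimal polynomial of $c\lambda$, which is an integer matrix with Perron-Frobenius eigenvalue $c\lambda$ and eigenvector $\textbf{n}' = (1,c\lambda,\ldots,(c\lambda)^{k-1})$. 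Fourth — the crucial positivity step — $M$ itself is typically not nonnegative, but since $c\lambda$ is a Perron-Frobenius number and $M$ has a strictly dominant positive eigenvalue with a positive eigenvector, a high power $M^m$ is strictly positive: one conjugates $M$ to put the $\textbf{n}'$-direction as a coordinate axis, observes the complementary eigenvalues are strictly smaller in modulus, and concludes $M^m$ (in the original integral basis) is eventually entrywise positive; choose such $m$ and set $\widetilde A = M^m \in Mat(\mathbb{N},k)$. Fifth, compute $S(\widetilde A)$: its Perron-Frobenius eigenvalue is $(c\lambda)^m$, its normalized eigenvector is proportional to $\textbf{n}'$, so $H(\widetilde A$-eigenvector$)$ is (up to the scaling $\alpha$) the group generated by $1,c\lambda,\ldots,(c\lambda)^{k-1}$, which generates the same $\mathbb{Q}$-span and, after taking the union over $\lambda^{-N}$, gives $S(\widetilde A) = \alpha S(A)$ for a suitable $\alpha \in \mathbb{Q}(\lambda)^{>0}$; here one uses $\lambda H(x)\subset H(x)$ and $\lambda^N \in$ the relevant group to absorb the discrepancies between $\mathbb{Z}[\lambda]$ and $H(x)$, exactly as in the proof of Lemma~\ref{matrixbuild}. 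Finally, minimality: if $\widetilde A \in Mat(\mathbb{N},k')$ with $S(\widetilde A) = \alpha S(A)$, then $G(S(\widetilde A)) = \alpha G(S(A))$ spans $\mathbb{Q}(\lambda)$ over $\mathbb{Q}$, which has dimension $k$; but $G(S(\widetilde A))$ is contained in $\mathbb{Q}(\widetilde\lambda)$ where $\widetilde\lambda$ is the Perron-Frobenius eigenvalue of $\widetilde A$, and $\deg\widetilde\lambda \le k'$ since $\widetilde\lambda$ is an eigenvalue of a $k'\times k'$ integer matrix — combined with $S(\widetilde A)=\alpha S(A)$ forcing $\mathbb{Q}(\widetilde\lambda) \supseteq \mathbb{Q}(\lambda)$ (equality actually), we get $k' \ge \deg\widetilde\lambda \ge \deg\lambda = k$.

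\textbf{Main obstacle.} The delicate part is the fourth step: guaranteeing that a genuine \emph{nonnegative} integer matrix of size exactly $k$ appears. The companion matrix of the minimal polynomial has the right eigenvalue and size but generally has negative entries; one must argue that passing to a high power (equivalently, telescoping the diagram) restores positivity, and this relies on $\lambda$ being strictly dominant among \emph{all} roots of its minimal polynomial — which is true precisely because $\lambda$ is a Perron-Frobenius eigenvalue of a primitive matrix, hence is a Perron number in the strong sense. I would also need to be careful that the scaling constant $c$ (clearing denominators so that $c\lambda$ is an algebraic integer generating $\mathbb{Q}(\lambda)$) is chosen so that the resulting $\alpha = $ (ratio of normalizations) is a genuine positive element of $\mathbb{Q}(\lambda)$, and that no rank is lost when intersecting with $[0,1]$ — but these are the same bookkeeping issues already handled in Lemma~\ref{matrixbuild}, so I expect them to go through routinely. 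The minimality half is comparatively easy once one notes $\deg\lambda$ is an invariant of $S(A)$ up to the scaling by $\alpha$, via Remark~\ref{nec_cond_lambda} and the fact that $G(S(A))$ spans $\mathbb{Q}(\lambda)$.
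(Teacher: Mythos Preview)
Your approach has a genuine gap in the fifth step. The companion matrix $M$ (and hence its power $M^m$) that you construct depends only on $\lambda$, not on the specific primitive matrix $A$. Its Perron--Frobenius eigenvector is a scalar multiple of $(1,\lambda,\ldots,\lambda^{k-1})$, so the group $S(\widetilde A)$ you obtain is a scalar multiple of $\mathbb{Z}[\lambda,\lambda^{-1}]$. But $S(A)=\bigcup_N \lambda^{-N}H(x)$ is a $\mathbb{Z}[\lambda,\lambda^{-1}]$-submodule of $\mathbb{Q}(\lambda)$ that genuinely depends on the lattice $H(x)=H(x_1,\ldots,x_s)$, and there is no reason for such a module to be \emph{principal}, i.e.\ of the form $\alpha\,\mathbb{Z}[\lambda,\lambda^{-1}]$ for a single $\alpha\in\mathbb{Q}(\lambda)$. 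Your phrase ``absorb the discrepancies between $\mathbb{Z}[\lambda]$ and $H(x)$'' hides exactly this obstruction: two full-rank $\lambda$-invariant lattices in $\mathbb{Q}(\lambda)$ are always commensurable, but commensurability does \emph{not} become equality-up-to-a-scalar after passing to $\bigcup_N \lambda^{-N}(\cdot)$. Concretely, two different primitive matrices with the same Perron eigenvalue $\lambda$ can have $S$-sets that are not scalar multiples of one another, yet your construction would assign them the same $\widetilde A$.

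The paper's proof avoids this by never leaving the lattice $H(x)$. It picks a $\mathbb{Z}$-basis $\textbf{f}_1,\ldots,\textbf{f}_k$ of the actual lattice $\Lambda(\textbf{x}_1,\ldots,\textbf{x}_s)$ corresponding to $H(x)$, arranges $\langle\textbf{f}_i,\textbf{n}\rangle>0$, and then applies powers of $D$ to open the cone $K(D^N\textbf{f}_1,\ldots,D^N\textbf{f}_k)$ wide enough to contain $\textbf{y}_1$ and all the $\textbf{x}_i$. The matrix $\widetilde A$ is then read off as the positive integer transition matrix expressing $D^{N-M}\textbf{f}_i$ in the basis $\{D^N\textbf{f}_j\}$. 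Because $\{D^N\textbf{f}_j\}$ is still a $\mathbb{Z}$-basis of the \emph{same} lattice (namely $\lambda^{-N}H(x)$), one gets $H(z_1,\ldots,z_k)=\lambda^{-N}H(x_1,\ldots,x_s)$ on the nose, and $\alpha$ is simply the normalization constant $\sum_l z_l$. Your positivity idea---pass to a high power so the dominant eigendirection wins---is morally the same as the paper's cone argument, but it must be run in a basis adapted to $H(x)$, not in the standard basis $1,\lambda,\ldots,\lambda^{k-1}$. Your minimality argument for $k$ is correct and agrees with the paper's.
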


\begin{proof}  Let $x = (x_1,...,x_s)^T$ be the normalized Perron-Frobenius eigenvector of $A$.
Let $\Lambda(\textbf{x}_1,...,\textbf{x}_s)$ be the lattice in $\mathbb R^k$  generated by $\textbf{x}_1,...,\textbf{x}_s$. Then $\Lambda(\textbf{x}_1,...,\textbf{x}_s) \subset \mathbb{Q}^k$ corresponds to the group $H(x_1,...,x_s) \subset \mathbb{R}$.
There exist $\textbf{f}_1,...,\textbf{f}_k \in \mathbb{Q}^k$ such that $\Lambda(\textbf{f}_1,...,\textbf{f}_k) = \Lambda(\textbf{x}_1,...,\textbf{x}_s)$ (see~\cite{Nath}). Since $1, \lambda,...,\lambda^{k-1} \in H(x)$, the vectors $\{\textbf{f}_i\}_{i=1}^k$ form a basis of $\mathbb{Q}^k$.
By changing $\textbf{f}_i$ to $-\textbf{f}_i$, we can make all vectors $\{\textbf{f}_i\}_{i=1}^k$ satisfy the inequality $\langle \textbf{f}_i, \textbf{n}\rangle > 0$. Since $D$ is a nonsingular linear transformation of $\mathbb{Q}^k$, the vectors $\{D^N \textbf{f}_i\}_{i=1}^k$ form a basis of $\mathbb{Q}^k$ for every $N \in \mathbb{N}$ and $\Lambda(D^N\textbf{f}_1,...,D^N\textbf{f}_k) = \Lambda(D^N\textbf{x}_1,...,D^N\textbf{x}_s)$.
Consider the cone
$$
K(D^N \textbf{f}_1,...,D^N \textbf{f}_k) = \{\sum_{i=1}^k \beta_i D^N \textbf{f}_i : \beta_i \geq 0, i = 1,...,k\}.
$$
 Recall that $\textbf{y}_1$ is an eigenvector of $C$ such that $\langle\textbf{y}_1, \textbf{n}\rangle > 0$ and the iterations of $C$ drive any ray which is not in $\pi$ to the limit ray generated by $\textbf{y}_1$. There exists $N \in \mathbb{N}$ such that the vectors $\textbf{y}_1, \textbf{x}_1,...,\textbf{x}_s$ lie in the cone $K(D^N \textbf{f}_1,...,D^N \textbf{f}_k)$. Then there exists an integer $M > 0$ such that $D^{N-M} \textbf{f}_i \in K(D^N\textbf{f}_1,...,D^N\textbf{f}_k)$ for $i = 1,...,k$. Since $\lambda H(x) \subset H(x)$, we also have $D^{N-M} \textbf{f}_i \in \Lambda(D^N\textbf{f}_1,...,D^N\textbf{f}_k)$ for $i = 1,...,k$.
Since $\textbf{f}_1,...,\textbf{f}_k$ are linearly independent, there exist positive integers $\{\tilde{a}_{ij}\}_{i,j = 1}^k$ such that $D^{N-M} \textbf{f}_i = \sum_{j=1}^k \tilde{a}_{ij} D^N \textbf{f}_j$ for $i = 1,...,k$. Set $z_i = \langle D^N \textbf{f}_i, \textbf{n}\rangle$ and $\widetilde{A} = (\tilde{a}_{ij})_{i,j = 1}^k$.
Then
$$
z = \left(\frac{z_1}{\sum_{l=1}^k z_k},...,\frac{z_k}{\sum_{l=1}^k z_k}\right)^T \in \mathbb{R}^{k}
$$
is a normalized Perron-Frobenius eigenvector for $\widetilde{A}$ with eigenvalue $\lambda^M$.
Setting $\alpha = \sum_{l=1}^k z_k$, we obtain that $\alpha S(A) = S(\widetilde{A})$ because $H(z_1,...,z_k) = \frac{1}{\lambda^N}H(x_1,...,x_s)$.

Now we show that if $P \in Mat(\mathbb{N},l)$ such that $S(P) = \alpha S(A)$ for some $\alpha \in \mathbb{R}$ then $l \geq k$.
First, we show that $\deg \beta = d \geq k$. Assume that the converse holds. Suppose $\deg \beta = d < k$. Recall that $1, \lambda, ... , \lambda^{k-1} \in H(x_1,...,x_s)$. We have $S(P) \subset \mathbb{Q}(\beta)$. Then the elements of $S(P) = \alpha S(A)$ can be represented as some vectors of $\mathbb{Q}^d$. In particular, $\alpha, \alpha \lambda, ...,\alpha \lambda^{k-1}$ can be represented as $\mathbf v_1,...,\mathbf v_k \in \mathbb{Q}^d$. Since $d < k$, the vectors $\{\mathbf v_i\}_{i=1}^k$ are linearly dependent over $\mathbb{Q}$, hence there exist rational numbers $\{r_i\}_{i = 1}^k$ such that $\sum_{i=1}^k r_i \mathbf v_i = 0$. Returning from $\mathbb{Q}^d$ to $\mathbb{R}$ we obtain $\alpha \sum_{i=1}^k r_i \lambda^{i} = 0$. But then the algebraic degree of $\lambda$ is less than $k$. This is a contradiction, hence $d \geq k$. Since $\beta$ is a root of characteristic polynomial for $P$, the dimension of $P$ is not less than $k$.
\end{proof}

\begin{remark}
Given a Perron number $\lambda$ with $\deg \lambda = k$, we find a primitive matrix  $\widetilde{A} \in Mat(\mathbb{N},k)$ such that $\lambda^M$ is the Perron-Frobenius eigenvalue of $\widetilde{A}$ for some $M \in \mathbb{N}$. In~\cite{Lind}, it was shown that there may not exist a matrix $\widetilde{A} \in Mat(\mathbb{N},k)$ with the Perron-Frobenius eigenvalue $\lambda$ (see Example~\ref{PerronLind} below).
\end{remark}

Recall that a stationary Bratteli diagram  may have multiple edges between levels 0 and 1. The following theorem is a generalization of Theorem~\ref{oeodometer}.

\begin{theorem}\label{minBD}
Let $\sigma$ be a primitive substitution whose incidence matrix has a Perron-Frobenius eigenvalue $\lambda$ and $k = \deg \lambda$. Then $(X_\sigma, T_\sigma)$ is orbit equivalent to a Bratteli-Vershik system defined on a stationary Bratteli diagram with $k$ vertices on each level. Moreover, there is no stationary Bratteli-Vershik system  with less than $k$ vertices which is orbit equivalent to $(X_\sigma, T_\sigma)$.
\end{theorem}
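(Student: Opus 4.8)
The plan is to prove the two assertions separately, using Lemma~\ref{min_matrix} as the main engine. Throughout write $A=A_\sigma$ for the incidence matrix of $\sigma$, $\lambda$ for its Perron--Frobenius eigenvalue, and $k=\deg\lambda$. If $k=1$, then $\lambda$ is a Perron number lying in $\mathbb Q$, hence a natural number, and Theorem~\ref{oeodometer} already gives that $(X_\sigma,T_\sigma)$ is orbit equivalent to a stationary odometer, i.e.\ to a Bratteli--Vershik system on a stationary diagram with a single vertex on each level; the optimality is then trivial. So assume $k\ge 2$. Apply Lemma~\ref{min_matrix} to $A$: by the lemma and its proof it produces a primitive matrix $\widetilde A\in Mat(\mathbb N,k)$ with Perron--Frobenius eigenvalue $\lambda^{M}$ for some $M\in\mathbb N$, normalized Perron--Frobenius eigenvector $z=(z_{1},\dots,z_{k})^{T}$, and $S(\widetilde A)=\alpha S(A)$ for some $\alpha\in\mathbb Q(\lambda)$, $\alpha>0$. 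I would then build the stationary Bratteli diagram $\widetilde B$ with $k$ vertices on every level $n\ge 1$, with incidence matrix $\widetilde A^{T}$ between any two consecutive positive levels, and with $c_{i}\ge 1$ edges from $v_{0}$ to the $i$-th vertex of level $1$, where $c=(c_{1},\dots,c_{k})^{T}$ is a positive integer vector chosen below; $\widetilde B$ is simple and stationary, hence admits a proper order. Arguing as in Subsection~\ref{Bratteli diagrams} but keeping track of the multiplicities $c$, the unique invariant measure $\widetilde\mu$ satisfies $\widetilde\mu(X_{i}^{(n)}(\overline e))=z_{i}\bigl(\lambda^{M(n-1)}\langle c,z\rangle\bigr)^{-1}$, so the clopen values set of $(X_{\widetilde B},\phi_{\widetilde B})$ equals $\langle c,z\rangle^{-1}S(\widetilde A)\cap[0,1]=\bigl(\alpha\langle c,z\rangle^{-1}\bigr)S(A)\cap[0,1]$ (note $\langle c,z\rangle\in H(z)\subset S(\widetilde A)$, so $1$ lies in the relevant group). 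Since $S(A)$ is invariant under multiplication by $\lambda^{\pm1}$ (immediate from $\lambda H(x)\subset H(x)$), the task reduces to choosing $c$ with $\langle c,z\rangle=\alpha\lambda^{j}$ for some $j\in\mathbb Z$: then the clopen values set of $(X_{\widetilde B},\phi_{\widetilde B})$ is $S(A)\cap[0,1]=S(\mu)$, and orbit equivalence with $(X_\sigma,T_\sigma)$ follows from the Giordano--Putnam--Skau criterion \cite{GPS}.

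To produce such a $c$, I would argue as follows. Since $1\in S(A)$, we have $\alpha=\alpha\cdot 1\in\alpha S(A)=S(\widetilde A)=\bigcup_{p\ge 0}\lambda^{-Mp}H(z)$, so $\alpha\lambda^{Mp_{0}}\in H(z)$ for some $p_{0}\in\mathbb N$; write $\alpha\lambda^{Mp_{0}}=\sum_{i}b_{i}z_{i}$ with $b_{i}\in\mathbb Z$. From $\widetilde A z=\lambda^{M}z$ one gets, for every $t\ge 0$, $\alpha\lambda^{M(p_{0}+t)}=\sum_{i}\bigl((\widetilde A^{T})^{t}b\bigr)_{i}z_{i}$. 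Now $\widetilde A^{T}$ is primitive with Perron--Frobenius eigenvalue $\lambda^{M}>1$, positive right Perron eigenvector $\xi$, and positive left Perron eigenvector $z$, so $(\widetilde A^{T})^{t}b=\lambda^{Mt}\bigl(\theta\,\xi+o(1)\bigr)$ with $\theta=\langle z,b\rangle/\langle z,\xi\rangle=\alpha\lambda^{Mp_{0}}/\langle z,\xi\rangle>0$; hence every coordinate of $(\widetilde A^{T})^{t}b$ tends to $+\infty$. For $t$ large enough $(\widetilde A^{T})^{t}b$ is therefore a positive integer vector, which we take as $c$, and then $\langle c,z\rangle=\alpha\lambda^{M(p_{0}+t)}$, as required; this completes the construction.

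For the optimality, let $(X_{B},\phi_{B})$ be a Bratteli--Vershik system on a stationary Bratteli diagram with $l$ vertices on each level that is orbit equivalent to $(X_\sigma,T_\sigma)$. Being orbit equivalent to a minimal system it is minimal, hence, being stationary, the diagram is simple and the system is uniquely ergodic. Let $P$ be the transpose of the incidence matrix of this diagram between two consecutive positive levels, $w$ its normalized Perron--Frobenius eigenvector, and $\gamma=\langle c',w\rangle$, where $c'$ is the column of multiplicities between levels $0$ and $1$; exactly as above, the clopen values set of $(X_{B},\phi_{B})$ is $\gamma^{-1}S(P)\cap[0,1]$, and since $\gamma\in H(w)\subset S(P)$ this set generates the additive group $\gamma^{-1}S(P)$. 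By \cite{GPS}, orbit equivalence forces this group to coincide with the group generated by $S(\mu)$, which is $S(A)$; hence $S(P)=\gamma S(A)$ with $\gamma>0$ real. By the last assertion of Lemma~\ref{min_matrix}, $l\ge k$, which is the optimality statement.

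The crux of the argument is the passage from Lemma~\ref{min_matrix}, which pins down the relevant group only up to the scalar factor $\alpha$, to a genuine orbit equivalence, for which the clopen values sets must agree \emph{as subsets of} $[0,1]$. This is exactly the role of the multiple edges between levels $0$ and $1$: they rescale $S(\widetilde A)=\alpha S(A)$ by $\langle c,z\rangle^{-1}$, and the Perron--Frobenius argument of the second paragraph is what guarantees that $\langle c,z\rangle$ can be made equal to $\alpha\lambda^{j}$, so that the scalar is absorbed into a power of $\lambda$ and disappears from the clopen values set. If one insisted on keeping no multiple edges between levels $0$ and $1$, Theorem~\ref{mult_edges_thm} would in general force the number of vertices above $k$, so the multiplicities are genuinely necessary.
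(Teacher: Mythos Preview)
Your construction of the $k$-vertex diagram and your optimality argument are both correct, but they share one unproven assumption: you take for granted that the group generated by the clopen values set of $(X_\sigma,T_\sigma)$ equals $S(A_\sigma)$. The identification $S(\mu)=S(A)\cap[0,1]$ in Subsection~\ref{Bratteli diagrams} is derived for the Bratteli--Vershik system on the diagram with incidence matrix $A^{T}$ and simple edges to $v_0$; that system is isomorphic to $(X_\sigma,T_\sigma)$ only when $\sigma$ is \emph{proper}. For a general primitive $\sigma$ the Bratteli model is obtained via return words, and its incidence matrix need not be $A_\sigma$. The paper's proof therefore opens with the reduction to the proper case: Theorem~\ref{return_words} replaces $\sigma$ by a proper $\zeta$ with Perron eigenvalue $\lambda^{d}$, and Remark~\ref{nec_cond_lambda} gives $\deg\lambda^{d}=\deg\lambda=k$, so nothing is lost. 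Once you insert this step, your argument goes through verbatim; without it, both the equality $S(\mu)=S(A)\cap[0,1]$ in the first part and the identity $G(S(\mu))=S(A)$ in the optimality part are unjustified.

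Modulo that, your route to the multiplicity vector $c$ is genuinely different from the paper's. The paper reaches back into the proof of Lemma~\ref{min_matrix}: there the vectors $\textbf{x}_1,\dots,\textbf{x}_s$ are shown to lie in the positive cone \emph{and} in the lattice generated by $D^{N}\textbf f_1,\dots,D^{N}\textbf f_k$, so $\textbf x_i=\sum_j b_{ij}D^{N}\textbf f_j$ with $b_{ij}\in\mathbb N$, and one simply sets $c_j=\sum_i b_{ij}$; the normalization $\langle c,z\rangle=\sum_i x_i=1$ then comes for free. You instead treat Lemma~\ref{min_matrix} as a black box and manufacture $c$ by the Perron--Frobenius asymptotics of $(\widetilde A^{T})^{t}b$, forcing positivity for large $t$. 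Both arguments are valid; the paper's is shorter because it recycles work already done inside the lemma, while yours has the virtue of depending only on the \emph{statement} of Lemma~\ref{min_matrix}. Your separate treatment of $k=1$ via Theorem~\ref{oeodometer} is also a welcome addition, since Lemma~\ref{min_matrix} is stated only for $k\ge 2$.
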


\begin{proof}  Suppose $\sigma$ is not a proper substitution. Then, by Theorem~\ref{return_words}, there exists a proper substitution $\zeta$ with incidence matrix $A_\zeta$ such that $(X_\zeta,T_\zeta)$ is isomorphic to $(X_\sigma, T_\sigma)$ and the Perron-Frobenius eigenvalue of $A_\zeta$ is $\lambda^d$ for some $d \in \mathbb{N}$. By Remark~\ref{nec_cond_lambda}, we have $\deg \lambda = \deg \lambda^d = k$.
Thus, without loss of generality, we may assume that $\sigma$ is a proper substitution.

We will use the notation from Lemma~\ref{min_matrix}. Let $A$ be the incidence matrix for $\sigma$. There exist a primitive matrix $\widetilde{A} \in Mat(\mathbb{N},k)$ and a positive number $\alpha \in \mathbb{Q}(\lambda)$ such that $S(\widetilde{A}) = \alpha S(A)$. Let $(B, \leq)$ be a stationary ordered Bratteli diagram corresponding to $\sigma$ and $\mu$ be the unique invariant measure for the Bratteli-Vershik system $(X_B, \phi_B)$.
Suppose  $\widetilde{B}$ is the stationary Bratteli diagram with incidence matrix $\widetilde{F} = \widetilde{A}^T$ and no multiple edges between levels 0 and 1.
Let $x = (x_1,...,x_s)^T$ be a normalized Perron-Frobenius eigenvector for $A$. The diagram $\widetilde{B}$ has $k$ vertices and $\frac{1}{\alpha}(\langle D^N \textbf{f}_1, \textbf{n}\rangle,...,\langle D^N \textbf{f}_k, \textbf{n}\rangle)^T$ is the  normalized Perron-Frobenius eigenvector for $\widetilde{A}$.
Since the vectors $\textbf{x}_1,...,\textbf{x}_s$ lie in the positive cone $K(D^N \textbf{f}_1,...,D^N \textbf{f}_k)$ and in the lattice $\Lambda(D^N\textbf{f}_1,...,D^N\textbf{f}_k)$, each $\textbf{x}_i$ is a linear combination of $\{D^N\textbf{f}_j\}_{j=1}^k$ with natural coefficients.
We make a finite change between the zero and first levels of $\widetilde{B}$ and obtain $B_1$ as follows. If $\textbf{x}_i = \sum_{j=1}^k b_{ij}D^N\textbf{f}_j$ then let $B_1$ have $\sum_{i=1}^s b_{ij}$ edges between $v_0$ and $j$-th vertex of the first level. Let $\leq_1$ be a proper order on $B_1$. Let $\nu$ be the unique invariant measure for $(X_{B_1}, \phi_{B_1})$. Then $(B_1,\leq_1)$ is a stationary Bratteli diagram such that $G(S(\nu)) = G(S(\mu))$. Hence the Bratteli-Vershik system on $(B_1,\leq_1)$ is orbit equivalent to $(X_\sigma, T_\sigma)$. Let $\widetilde{\leq}$ be any proper order on $\widetilde{B}$.
Note that the proper substitution systems associated to $(B,\leq)$ and $(\widetilde{B},\widetilde{\leq})$ are Kakutani orbit equivalent.
By Lemma~\ref{min_matrix}, there is no stationary Bratteli-Vershik system on the diagram with less than $k$ vertices which is orbit equivalent to $(X_\sigma, T_\sigma)$.
\end{proof}

\begin{remark}\label{mult_edges}
The diagram $(B_1, \leq_1)$ has multiple edges between levels 0 and 1. By Theorem~\ref{mult_edges_thm}, there exists a stationary Bratteli diagram $(B_2, \leq_2)$ such that $B_2$ has no multiple edges between levels 0 and 1 and Bratteli-Vershik systems $(X_{B_1}, \phi_{B_1})$, $(X_{B_2}, \phi_{B_2})$ are isomorphic. Let $A_2$ be the matrix transpose to the incidence matrix of $B_2$. Then $S(A_2) = S(A)$.
\end{remark}

The corollary easily follows from the proof of Theorem~\ref{minBD} and Remark~\ref{mult_edges}.

A \textit{Perron number} is a real algebraic integer greater than one,  that is larger than the absolute value of any of its Galois conjugates \cite{Lind}.

\begin{corol}
Let $\lambda \in \mathbb{R}$ be a Perron number and  $\{x_i\}_{i=1}^s \subset \mathbb{Q}(\lambda) \cap (0, \infty)$ with $\sum_{i=1}^s x_i = 1$. Let $H =  H(x_1,...,x_s)$ be the additive group generated by $x_1,...,x_s$. Suppose  $\{\textbf{x}_i\}_{i=1}^s$ are vectors in $\mathbb{Q}^k$ corresponding to $\{x_i\}_{i=1}^s$ and $Lin_{\mathbb{Q}} \{\textbf{x}_i\}_{i=1}^s$ denotes the set of all rational linear combinations of $\{\textbf{x}_i\}_{i=1}^s$.

(i) If $Lin_{\mathbb{Q}} \{\textbf{x}_i\}_{i=1}^s = \mathbb{Q}^k$ and  $\lambda^M H \subset H$ for some  $M \in \mathbb{N}$, then there exists a primitive matrix $A$ with natural entries such that $S(A) = \bigcup_{N=0}^\infty \frac{1}{\lambda^N} H(x_1,...,x_s)$.

(ii) If $\lambda H \subset H$, then there exists a primitive matrix $A$ with natural entries such that $S(A) = \bigcup_{N=0}^\infty \frac{1}{\lambda^N} H(x_1,...,x_s)$.
\end{corol}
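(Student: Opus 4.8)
The plan is to isolate the matrix-building argument from the proofs of Lemma~\ref{min_matrix} and Theorem~\ref{minBD} and feed it $\lambda$ and $H$ directly, the Perron hypothesis on $\lambda$ doing the work that, in those proofs, was done by $\lambda$ being the Perron--Frobenius eigenvalue of a primitive matrix. Write $S_\infty=\bigcup_{N\ge 0}\lambda^{-N}H$. The first step is to reduce both (i) and (ii) to the single situation in which $\lambda H\subseteq H$ and $Lin_{\mathbb{Q}}\{\mathbf x_i\}_{i=1}^s=\mathbb{Q}^k$. For (ii): from $\lambda H\subseteq H$ and $x_1\ne 0$ the elements $x_1,\lambda x_1,\dots,\lambda^{k-1}x_1$ lie in $H$ and are $\mathbb{Q}$-independent (since $\deg\lambda=k$), so they span $\mathbb{Q}^k$; thus (ii) is the case $M=1$ of (i). For (i): replace $H$ by $H':=H+\lambda H+\dots+\lambda^{M-1}H$, which satisfies $\lambda H'\subseteq H'$ (using $\lambda^M H\subseteq H$), contains $H$ (so the span is still $\mathbb{Q}^k$ and $1\in H'$), and has $\bigcup_N\lambda^{-N}H'=S_\infty$; so it suffices to realize $S_\infty$ starting from $H'$. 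Finally, being finitely generated and containing $1$, the relevant group admits a finite generating set of \emph{positive} reals summing to $1$ (in (ii) the given $x_i$ already do), so from now on I assume $\lambda H\subseteq H$, $Lin_{\mathbb{Q}}\{\mathbf x_i\}=\mathbb{Q}^k$, $1\in H$, $x_i>0$, $\sum_i x_i=1$.

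Next I would build a $k\times k$ matrix exactly as in Lemma~\ref{min_matrix}. Since $\lambda$ is a Perron number of degree $k$, the apparatus recalled before Remark~\ref{nec_cond_lambda} is available: $\mathbf n=(1,\lambda,\dots,\lambda^{k-1})^T$, the rational matrix $D$ of multiplication by $\lambda^{-1}$ on $\mathbb{Q}(\lambda)\cong\mathbb{Q}^k$, and $C=D^{-1}$, whose unique dominant eigenray $\mathbb{R}_{>0}\mathbf y_1$ (with $\langle\mathbf y_1,\mathbf n\rangle>0$) exists precisely because $\lambda$ strictly exceeds its Galois conjugates in modulus. Pick, via~\cite{Nath}, a lattice basis $\mathbf f_1,\dots,\mathbf f_k$ of $\Lambda:=\Lambda(\mathbf x_1,\dots,\mathbf x_s)$ (a basis of $\mathbb{Q}^k$, since $H$ spans it), with $\langle\mathbf f_i,\mathbf n\rangle>0$ after flipping signs. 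Choose $N$ so that $\mathbf y_1,\mathbf x_1,\dots,\mathbf x_s\in K(D^N\mathbf f_1,\dots,D^N\mathbf f_k)$ and then a large $t\ge N$ so that $D^{N-t}\mathbf f_i=C^{t}(D^N\mathbf f_i)\in K(D^N\mathbf f_1,\dots,D^N\mathbf f_k)$ for every $i$ (possible since these vectors converge in direction to $\mathbf y_1$, interior to the cone); expanding $D^{N-t}\mathbf f_i=\sum_j\tilde a_{ij}D^N\mathbf f_j$, the cone gives $\tilde a_{ij}\ge 0$, while $D^{N-t}\mathbf f_i$ corresponds to $\lambda^{t-N}f_i\in\lambda^{t-N}H\subseteq H$, whence $D^{N-t}\mathbf f_i\in\Lambda\subseteq\lambda^{-N}\Lambda=\Lambda(D^N\mathbf f_1,\dots,D^N\mathbf f_k)$, so $\tilde a_{ij}\in\mathbb{Z}$; hence $\widetilde A:=(\tilde a_{ij})\in Mat(\mathbb{N},k)$ with $\widetilde A z=\lambda^t z$ for the positive vector $z$ given by $z_i=\langle D^N\mathbf f_i,\mathbf n\rangle$. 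Here primitivity of $\widetilde A$ comes for free: $\lambda^t$ is again a Perron number of degree $k$, so the characteristic polynomial of $\widetilde A$ (monic of degree $k$, with root $\lambda^t$) is the minimal polynomial of $\lambda^t$ and hence $\mathbb{Q}$-irreducible, so $\widetilde A$ has no invariant coordinate subspace and is irreducible, and its spectral radius $\lambda^t$ strictly dominates the remaining eigenvalues, forcing aperiodicity.

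The remaining point is that normalizing the eigenvector only gives $S(\widetilde A)=\alpha S_\infty$ for some positive $\alpha\in\mathbb{Q}(\lambda)$ which is in general not a power of $\lambda$ and so cannot be removed by rescaling the eigenvector of a $k\times k$ matrix --- \emph{this scalar-killing step is the only genuinely nonformal part}, and it is handled exactly as in the proof of Theorem~\ref{minBD}. Form the stationary properly ordered Bratteli diagram $B_1$ whose incidence matrices between consecutive levels from level $1$ onward all equal $\widetilde{A}^{T}$ and which has $\sum_i b_{ij}$ edges between $v_0$ and the $j$-th vertex of level $1$, where $\mathbf x_i=\sum_j b_{ij}D^N\mathbf f_j$ (the $b_{ij}$ are natural, since $\mathbf x_i\in K(D^N\mathbf f_1,\dots,D^N\mathbf f_k)\cap\lambda^{-N}\Lambda$); because $\sum_i x_i=1$, the same computation as in Theorem~\ref{minBD} shows that the unique invariant measure $\nu$ of $(X_{B_1},\phi_{B_1})$ has $G(S(\nu))=\bigcup_{N\ge0}\lambda^{-N}H=S_\infty$. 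Finally, Theorem~\ref{mult_edges_thm} (see Remark~\ref{mult_edges}) produces a genuinely stationary properly ordered Bratteli diagram $B_2$ with no multiple edges between levels $0$ and $1$ whose Bratteli--Vershik system is isomorphic to $(X_{B_1},\phi_{B_1})$; letting $A$ be the transpose of its constant incidence matrix, $A$ has natural entries, is primitive (the diagram is simple stationary), and $S(A)=G(S(\nu_{B_2}))=G(S(\nu_{B_1}))=S_\infty=\bigcup_{N=0}^\infty\lambda^{-N}H(x_1,\dots,x_s)$, which is the assertion (recalling that in case (i), $S_\infty=\bigcup_N\lambda^{-N}H'$ as well). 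I expect the scalar-killing passage through a modified-bottom Bratteli diagram, together with the verification that primitivity of $\widetilde A$ is automatic from the Perron property, to be the only places needing care; everything else transcribes Lemma~\ref{min_matrix} and Theorem~\ref{minBD}.
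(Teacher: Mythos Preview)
Your overall strategy---re-running the cone argument of Lemma~\ref{min_matrix} with abstract data $(\lambda,H)$, then killing the normalization scalar $\alpha$ via the modified first level exactly as in Theorem~\ref{minBD}, and finally invoking Theorem~\ref{mult_edges_thm}/Remark~\ref{mult_edges}---is precisely what the paper intends: its entire proof is the one-line remark that the corollary follows from those results. Your verification that $\widetilde A$ is automatically primitive (irreducible because its characteristic polynomial is the degree-$k$ minimal polynomial of the Perron number $\lambda^t$, and aperiodic because $\lambda^t$ strictly dominates its conjugates) is a nice addition the paper leaves implicit.

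There is, however, a real gap in your reduction of (i) to (ii). You claim that for $H'=H+\lambda H+\cdots+\lambda^{M-1}H$ one has $\bigcup_N\lambda^{-N}H'=S_\infty$, but the left side is always a group (since $\lambda H'\subseteq H'$) while the right side need not be. Take $\lambda=1+\sqrt2$ (Perron of degree $2$) and $H=\mathbb Z+2\sqrt2\,\mathbb Z$, generated by the positive elements $x_1=2\sqrt2-2$, $x_2=3-2\sqrt2$ with $x_1+x_2=1$; the corresponding vectors $(-4,2),(5,-2)$ span $\mathbb Q^2$, and one checks $\lambda^2H=H$ while $\lambda H\not\subseteq H$, so $M=2$. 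Then $S_\infty=H\cup\lambda^{-1}H$, and $1\in H$, $1+\sqrt2\in\lambda^{-1}H$, yet their difference $\sqrt2$ lies in neither set, so $S_\infty$ is not a group; meanwhile $H'=\mathbb Z[\sqrt2]\supsetneq S_\infty$. This example in fact shows that part~(i) \emph{as literally stated} cannot hold, since $S(A)$ is always a group; the statement should presumably be read with $\bigcup_N\lambda^{-NM}H$ (equivalently, the group generated by $S_\infty$) on the right, and for that reading your argument goes through---indeed one may simply run the cone construction with step size a multiple of $M$, bypassing the passage to $H'$ altogether.
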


\begin{prop}
Let $\lambda \in \mathbb{Q}$ and $\{x_i\}_{i=1}^s \subset \mathbb{Q} \cap (0, \infty)$ with $\sum_{i=1}^s x_i = 1$. Let $H(x) =  H(x_1,...,x_s)$ be an additive group generated by $x_1,...,x_s$.
Let $Y = \bigcup_{l \in \mathbb{N}}\{\{y_j\}_{j=1}^l \subset \mathbb{Q}\cap (0, \infty): \sum_{j=1}^l y_j = 1 \mbox{ and } H(y) = H(x)\}$. Then $Y$ is a finite set.
\end{prop}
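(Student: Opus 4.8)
The plan is to reduce the statement to the elementary structure theory of finitely generated subgroups of $(\mathbb{Q},+)$. First I would observe that $H(x) = H(x_1,\ldots,x_s)$, being generated by finitely many rational numbers, is an infinite cyclic group; writing the $x_i$ over a common denominator exhibits $H(x)$ as $\frac{g}{b}\,\mathbb{Z}$ for suitable positive integers $g,b$. The one place where the hypothesis $\sum_{i=1}^s x_i = 1$ enters is now: since $1 \in H(x)$, the positive generator of $H(x)$ must have the form $1/q$ for some $q \in \mathbb{N}$, so in fact $H(x) = \frac{1}{q}\,\mathbb{Z}$.

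Next, let $\{y_j\}_{j=1}^l$ be an arbitrary element of $Y$, so that each $y_j \in \mathbb{Q} \cap (0,\infty)$, $\sum_{j=1}^l y_j = 1$, and $H(y_1,\ldots,y_l) = H(x) = \frac{1}{q}\,\mathbb{Z}$. In particular every $y_j$ lies in $\frac{1}{q}\,\mathbb{Z}$ and is strictly positive, hence $y_j = k_j/q$ with $k_j$ a positive integer. Summing and using $\sum_j y_j = 1$ gives $\sum_{j=1}^l k_j = q$; this single Diophantine constraint forces both $l \leq q$ and $1 \leq k_j \leq q$ for every $j$. Consequently each element of $Y$ is determined by a composition $(k_1,\ldots,k_l)$ of the fixed integer $q$ into at most $q$ positive parts, and the number of such compositions is finite (at most $2^{q-1}$). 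Hence $Y$ is finite.

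I do not expect a genuine obstacle here: the only point requiring care is making explicit that $1 \in H(x)$ pins the cyclic generator down to $1/q$, which is what makes the lower bound $y_j \geq 1/q$ (and thus the bound $l \leq q$) available, together with the remark that every candidate tuple in $Y$ automatically has all its denominators dividing $q$. After that, the proof is just the observation that $q$ has only finitely many compositions. (The hypothesis $\lambda \in \mathbb{Q}$ is not actually needed for this statement; it only records that we are working in the rational setting.)
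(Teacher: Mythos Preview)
Your proof is correct and follows essentially the same route as the paper's: both identify $H(x)=\frac{1}{q}\mathbb{Z}$ for a suitable $q\in\mathbb{N}$, deduce that any $\{y_j\}\in Y$ has $y_j=k_j/q$ with positive integers $k_j$ summing to $q$, and conclude finiteness from the finiteness of partitions/compositions of $q$. Your justification that the cyclic generator must be $1/q$ via $1\in H(x)$ is slightly more explicit than the paper's (which simply writes $x_i=p_i/q$ with $\gcd(p_1,\ldots,p_s)=1$), but the arguments are the same in substance.
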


\begin{proof}
Let $x_i = \frac{p_i}{q}$, where $\gcd(p_1,...,p_s) = 1$. We have $H(x) = \frac{1}{q}\mathbb{Z}$. Then $Y = \bigcup_{l \in \mathbb{N}}\{\{y_j\}_{j=1}^l \subset \mathbb{Q}\cap [0, \infty): \sum_{j=1}^l y_j = 1 \mbox{ and } H(y) = \frac{1}{q}\mathbb{Z}\}$. Hence $y_i = \frac{q_i}{q}$ for some $q_i \in \mathbb{N}$ such that $\sum_{i=1}^l q_i = q$. Since the number of partitions of $q$ into natural numbers is finite, we obtain $|Y| < \infty$.
\end{proof}

More results, which are related to the two statements above, can be found in \cite{Lind}.

\begin{example}
Here is an example illustrating Theorem~\ref{diff_alphabets}. Let $B_0$ be a stationary  simple Bratteli diagram with the matrix $A_0$ transpose to the incidence matrix:

\begin{center}
\begin{tabular*}{0.5\textwidth}%
     {@{\extracolsep{\fill}}cc}
$
A_0 =
\begin{pmatrix}
1 & 1\\
1 & 2
\end{pmatrix}
$
            &
\unitlength = 0.3cm
\begin{graph}(8,7)
\graphnodesize{0.4}
\roundnode{V0}(4,6)
\roundnode{V10}(1,2)
\roundnode{V11}(7,2)
\edge{V10}{V0}
\edge{V11}{V0}

\roundnode{V20}(1,-2)
\roundnode{V21}(7,-2)
\edge{V20}{V10}
\edge{V20}{V11}
\edge{V21}{V10}
\bow{V21}{V11}{0.06}
\bow{V21}{V11}{-0.06}

\freetext(4,-3.5){$.\ .\ .\ .\ .\ .\ .\ .\ .\ .\ .$}%
\freetext(4,-5.5){{Diagram $B_0$}}
\end{graph}

\vspace{1.5cm}
\end{tabular*}
\end{center}

Let $\leq_0$ be a proper order for $B_0$ and $\sigma$ be a substitution read on $(B_0, \leq_0)$. For example, choose
$$
\sigma=\left\{\begin{array}{l} a \mapsto ab\\b \mapsto abb\\ \end{array}\right.
$$
Since substitution $\sigma$ is Sturmian, we have $p_\sigma(n) = n + 1$ (see~\cite{Fogg}). By Lemma~\ref{matrixbuild}, we build a primitive matrix $A_1 \in Mat(\mathbb{N},3)$ such that $S(A_1) = S(A)$:

$$
A_1 =
\begin{pmatrix}
1 & 1 & 1\\
2 & 3 & 1\\
8 & 13 & 0
\end{pmatrix}.
$$

Consider a proper order $\leq_1$ on $B_1$ with the following substitution $\zeta$ read on $(B_1, \leq_1)$:
$$
\zeta=\left\{\begin{array}{l} a \mapsto abbcccccccc\\b \mapsto abbbccccccccccccc\\ c \mapsto ab \end{array}\right.
$$
It can be proved that $p_\zeta(n) \geq 3n$. Hence we already obtain that $(X_\sigma, T_\sigma)$ and $(X_\zeta, T_\zeta)$ are non-isomorphic orbit equivalent systems.

\begin{remark}
Let $\lambda$ be the Perron-Frobenius eigenvalue of $A_0$ and $x$ be the Perron-Frobenius eigenvector. Let $\mu$ be the unique $\mathcal{R}$-invariant measure for the corresponding diagram. Then $\lambda H(x) = H(x)$ and $G(S(\mu)) = H(x)$ (see \cite{S.B.O.K.}).
\end{remark}
\end{example}

\begin{example}\label{PerronLind}
The following example concerns Lemma~\ref{min_matrix}. Lind~\cite{Lind} pointed out  an example of Perron number $\lambda$ ($\lambda \approx 3.8916$ is the Perron root of the equation $f(t) = t^3 + 3t^2 - 15t -46$ and $\deg \lambda = 3$) such that  there exist no matrix $A \in Mat(\mathbb{N},3)$ with  Perron-Frobenius eigenvalue $\lambda$. The reason is that a $3$-dimensional matrix with spectral radius $\lambda$ has trace $- 3$, and hence cannot be non-negative. Lemma~\ref{min_matrix} states that such a matrix must exist for some power of $\lambda$. We present here a matrix $\widetilde{A} \in Mat(\mathbb{N},3)$ with  Perron-Frobenius eigenvalue $\lambda^M$ for some $M \in \mathbb{N}$.  In notation used in Lemma \ref{min_matrix}, we notice that
$$
C =
\begin{pmatrix}
0 & 0 & 46\\
1 & 0 & 15\\
0 & 1 & - 3\\
\end{pmatrix},
$$
and $\textbf{y}_1 = (1, \frac{1}{\lambda} + \frac{15}{46}, \frac{1}{\lambda^2} + \frac{15}{46} \frac{1}{\lambda} - \frac{3}{46})^T$ has positive coordinates (see also ~\cite{S.B.O.K.}). Let $\{\textbf{e}_i\}_{i=1}^3$ be the standard basis of $\mathbb{Q}^3$, hence $\langle \textbf{e}_i, \textbf{n}\rangle = \lambda^{i-1} > 0$. Then the iterates of matrix $C$ drive each $\textbf{e}_i$ closer to $\textbf{y}_1$ when $M$ is growing, hence  $C^M \textbf{e}_i$ has all positive coordinates for sufficiently large $M$. We can choose $\widetilde{A} = C^M$. In this specific example, it suffices to take  $M \geq 49$.
\end{example}

\bibliographystyle{amsplain}

\end{document}